\documentclass{amsart}
\UseRawInputEncoding
\usepackage{amsmath,empheq, amsthm}
\usepackage[utf8]{inputenc}
\usepackage{comment}
\usepackage{amscd}
\usepackage{todonotes}
\usepackage{amssymb}
\usepackage{pstricks}

\usepackage{mathrsfs}
\usepackage{graphicx}
\usepackage{lineno}
\usepackage{subcaption}
\usepackage{todonotes}
\newtheorem{definition}{Definition}
\newtheorem{theorem}{Theorem}

\newtheorem{lemma}{Lemma}

\newtheorem{remark}{Remark}
\newtheorem{example}{Example}

\newcommand{\R}{\mathbb{R}}
\numberwithin{equation}{section}

\title[]{
Smoothness of linearization by mixing parameters of dichotomy, bounded growth and perturbation}
\keywords{Nonautonomous Differential Equations, Nonautonomous Linearization,, Nonuniform Exponential Dichotomy, Nonuniform Bounded Growth, Diffeomorphism.}
\subjclass{34D09, 37C60, 37B25}
\thanks{The first author was funded by FONDECYT Regular Grant 1240361. The second author was funded by FONDECYT Postdoctoral Grant 3210132. The third author was funded by FONDECYT Regular Grant 1210733} 
\author[]{\'Alvaro Casta\~neda}
\author[]{Ignacio Huerta}
\author[]{Gonzalo Robledo}

\address{Universidad de Chile, Departamento de Matem\'aticas. Casilla 653, Santiago, Chile}
\address{Departamento de Matem\'atica, Universidad T\'ecnica Federico Santa Mar\'ia, Casilla 110-V, Valpara\'iso, Chile}
\email{castaneda@uchile.cl, ignacio.huertan@usm.cl grobledo@uchile.cl }

\begin{document}

\begin{abstract}
We study the smoothness properties of a global and non\-au\-to\-no\-mous topological conjugacy
between a linear system and a quasilinear perturbation. The linear system exhibits a nonuniform exponential dichotomy with a nontrivial projector and nonuniform bounded growth property. Additionally, the quasilinear perturbation is dominated by an increasing exponential function. Emphasis is placed on employing a set of parameters to describe the conditions of dichotomy, bounded growth and quasilinear perturbations. Finally, we prove that modifying these conditions enables us to achieve a broader smoothness interval.

\end{abstract}

\maketitle

\section{Introduction}
The study of the smoothness properties of the topological conjugacy is relatively recent in the literature 
of nonautonomous systems and a sequence
of accumulative results have been obtained is the last years. This article can be seen as a
continuation of this trend and its main contribution
is a set of sharper results for a specific case.

Roughly speaking, the topological conjugacy refers to the existence of a time--parametrized family of $\mathbb{R}^{n}$--homeomorphisms mapping solutions of a linear system into solutions of a quasilinear perturbed one and vice versa. In this work, we revisit the problem of the topological conjugacy between the following systems of nonautonomous ordinary differential equations:
\begin{subequations}
  \begin{empheq}[left=\empheqlbrace]{align}
    & \dot{x} = A(t)x, \label{lin1} \\
    & \dot{y} = A(t)y + f(t,y), \label{nolin1}
  \end{empheq}
\end{subequations}
where $A \colon \mathbb{R}_0^+ \to M_n(\mathbb{R})$ 
and $f: \mathbb{R}_{0}^{+} \times \mathbb{R}^n \to \mathbb{R}^n$ are continuous functions such that the existence, uniqueness and unbounded forward continuability of solutions is ensured. Moreover $\mathbb{R}_{0}^{+}:=[0,+\infty)$
and $M_{n}(\mathbb{R})$ is the set of square 
$n\times n$ matrices with real coefficients. A fundamental matrix associated to (\ref{lin1})
will be denoted by $\Phi(t)$ while $\Phi(t,s):=\Phi(t)\Phi^{-1}(s)$ is the corresponding evolution operator. From now on, the symbol $|\cdot|$ will denote a norm in $\mathbb{R}^{n}$ while $\|\cdot\|$ will be its induced matrix norm.

We will consider the following formal definition of topological conjugacy:
\begin{definition}
\label{eqtopnouniforme}
The systems \textnormal{(\ref{lin1})} and \textnormal{(\ref{nolin1})} will be called topologically conjugated if there exists a map $H:\R_0^{+}\times\R^n \to \mathbb{R}^{n}$ with the following properties:
\begin{itemize}
    \item [\textbf{(TC1)}] For each fixed $t\in\R_0^{+}$, the map $\xi\mapsto H_{t}(\xi):=H(t,\xi)$ is an homeomorphism and its inverse is denoted by $\xi\mapsto H_{t}^{-1}(\xi):=H^{-1}(t,\xi)=G(t,\xi):=G_{t}(\xi)$.  
      \item [\textbf{(TC2)}] If $t\mapsto x(t,\tau,\xi)$ is the solution of \textnormal{(\ref{lin1})} passing through $\xi$ at $t=\tau$, then 
    $t\mapsto H(t,x(t,\tau,\xi))$ is solution of \textnormal{(\ref{nolin1})}  passing through $H(\tau,\xi)$ at $t=\tau$, that is
 \begin{equation} \label{conjugacion}
H(t,x(t,\tau,\xi))=y(t,\tau,H(\tau,\xi)) \quad \text{for all}\; t\in\R_0^{+}.
\end{equation}

\noindent    Additionnally, if $t\mapsto y(t,\tau,\xi)$ is the solution of \textnormal{(\ref{nolin1})}  passing through $\xi$ at $t=\tau$, then $t\mapsto G(t,y(t,\tau,\xi))$ is the solution of \textnormal{(\ref{lin1})} passing through $G(\tau,\xi)$ at $t=\tau$, that is
\begin{equation}
\label{conj2}
G(t,y(t,\tau,\xi))=x(t,\tau,G(\tau,\xi))=\Phi(t,\tau)G(\tau,\xi) \quad \textnormal{for all $t\in \mathbb{R}_{0}^{+}$}.
\end{equation}
    \item [\textbf{(TC3)}] If $|\xi|\rightarrow+\infty$, then $|H(t,\xi)|\rightarrow+\infty$ for any fixed $t\geq 0$.
  
\end{itemize}
\end{definition}

It is important to emphasize that there doesn’t exist a univocal definition of topological conjugacy and the literature shows slight variations of Definition \ref{eqtopnouniforme}, which are mainly focused in properties that are equivalent or more general than \textbf{(TC3)}. For example, in \cite{PalmerEqTop} it is also stated that if $|\xi|\to 0$ then $|H(t,\xi)|\to 0$ for any fixed $t\geq 0$. Moreover, in \cite{Palmer} it is stated that 
$|\xi-H(t,\xi)|$ is bounded for any $t$ and $\xi$, which implies \textbf{(TC3)}.

The problem of determining conditions on (\ref{lin1})--(\ref{nolin1}) ensuring the existence of a topological conjugacy 
has been addressed by several ways. For example, the \textit{crossing time approach} and the \textit{Green function approach}, both inspired in the classical Theorem of Hartman--Grobman \cite{Grobman, Hartman1}, which constructs a local topological conjugacy between a nonlinear system and its linearized counterpart in a neighborhood of an hyperbolic equilibrium point.

The crossing time approach --roughly speaking-- assumes that the origin is a
globally uniformly asymptotically stable equilibrium of (\ref{lin1})--(\ref{nolin1}). This implies that any solution
of these systems, either forwardly of backwardly have a unique intersection with the unit sphere at a time named \textit{crossing time}, which are the basis for the construction of the homeomorphisms. We refer the reader to \cite{Lin,PalmerEqTop} for some examples
and \cite{Huerta, WuXia} for some generalizations.

The Green function approach backs to the seminal work of K.J. Palmer \cite{Palmer}, which assumes that the linear system
(\ref{lin1}) has a property of exponential dichotomy (a formal definition will be given in the next section) which emulates some qualitative features of the hyperbolicity to the nonautonomous framework. In addition, it is assumed that the quasilinear perturbation of (\ref{nolin1}) verifies some Lipschitzness and boundedness properties. A noticeable feature of this approach is that provides an explicit construction of the homeomorphisms $\xi \mapsto G(t,\xi)$ and $\xi \mapsto H(t,\xi)$. 

This work follows the approach based in the Green function and  --in order to contextualize its novelty-- 
we will describe the progress made in the study of these homeomorphisms.
\newpage

\subsection{The Green function approach} 

\subsubsection{State of art of continuity results}
This approach assumes that the linear system (\ref{lin1}) has a property of dichotomy on the interval $J\subseteq \mathbb{R}$ while the quasilinear perturbation
$x\mapsto f(t,x)$ verify the following properties for any $t\in J$ and $x,\tilde{x}\in \mathbb{R}^{n}$:
\begin{equation*}
|f(t,x)-f(t,\tilde{x})|\leq L(t)|x-\tilde{x}| \,\, \textnormal{and} \quad |f(t,x)|\leq B(t),   
\end{equation*}
where $L\colon J\to (0,+\infty)$ and $B\colon J\to (0,+\infty)$ satisfy the additional condition for any fixed $t\in J$:
\begin{equation}
\label{GVNL}
\int_{J}\|\mathcal{G}(t,s)\|L(s)\,ds =q<1  \,\, \textnormal{and} \,\,  \int_{J}\|\mathcal{G}(t,s)\|B(s)\,ds =p< +\infty, 
\end{equation}
and $\mathcal{G}(t,s)$ is the Green function associated to the corresponding dichotomy on $J$, which will be 
formally stated in the next section.


The seminal work of Palmer constructs an explicit homeomorphism by considering an exponential dichotomy on $J=\mathbb{R}$
and supposing that $L(\cdot)$ and $B(\cdot)$ are constant functions. The Palmer's result is inspired in a previous work of global linearization obtained by C. Pugh \cite{Pugh} 
in the autonomous framework, where the linear part is hyperbolic. We stress that A. Reinfelds constructed a relatively
similar pair of homeomorphisms in \cite{Reinfelds}. Additionally, J. Shi and K. Xiong \cite{Shi} noted that in many cases, the condition \textbf{(TC3)} for topological conjugacy can be strengthened, ensuring that the homeomorphism $\xi\mapsto H(t,\xi)$ exhibits uniform continuity and/or H\"older continuity with respect to the variable $\xi$. 

\medskip
The explicit construction of the Palmer's homeomorphism has become a method which has been 
progresively improved to carry out linearization results
for systems (\ref{lin1})--(\ref{nolin1}) whose linear part have dichotomies more general than the exponential one, which impose 
stronger conditions for the maps $t\mapsto L(t)$ and  $t\mapsto B(t)$. We refer the reader to \cite{BentoSilva, ChenXia, Huerta, Jiang, Lu, RS, ZFZ} for a representative set of these generalizations.

\subsubsection{State of art of smoothness results}

Despite the impressive amount of research devoted to the linearization via the Green function approach, the study 
of its smoothness properties is relatively recent and less extensive.

A first group of results assumed, analogously to the Hartman's approach \cite{Hartman1}, a contractive context, namely, the linear system (\ref{lin1}) is asymptotically stable in some sense: in \cite{CRDif} it was assumed that (\ref{lin1}) is uniformly exponentially stable, namely, the linear system exhibits an exponential dichotomy over $\mathbb{R}$ with the identity projector, achieving a $C^{2}$--global linearization under more restrictive conditions on the quasilinear perturbation. This result was refined in \cite{CRM}, considering an exponential dichotomy in $\mathbb{R}_0^{+}$ which simplified the technical assumptions regarding nonlinearities. Additionally, it was shown that the homeomorphism is $C^{r}$, with $r\geq1$, in a straightforward manner. The results from \cite{CRM} were generalized in \cite{CRM2} where it was assumed that (\ref{lin1}) is nonuniformly asymptotically stable. 

Recently, by considering the nontrivial projectors case, in \cite{Jara} N. Jara extended the above results obtaining a $C^{2}$ class of the homeomorphism by assuming a more general definition of nonuniform exponential dichotomy. 

For another works devoted to the smoothness of the topological conjugacy beyond the Green function approach, we refer the reader to \cite{CDS, Dragicevic, David}.

\subsection{Novelty of this work}  Similarly to \cite{Jara}, this work addresses the smoothness of a topological conjugacy by considering a dichotomy with nontrivial projector. Nevertheless, we stress that the differences are the following: i) we restrict our interest to the nonuniform exponential dichotomy on $\mathbb{R}_{0}^{+}$ for the linear system (\ref{lin1}), ii) we add a supplementary assumption of nonuniform bounded growth for (\ref{lin1}) and iii) we work with functions $t\mapsto L(t)$ and $t\mapsto B(t)$ whose properties are less restrictive than those assumed in \cite{Jara}. In particular, we explore if we consider classes of unbounded functions $t\mapsto B(t)$, to what extent smoothness results can be obtained.

\subsection{Structure}
In this paper we seek to obtain a new version of a linearization theorem in the nonautonomous context, where we will also study the smoothness of the homeomorphism that defines the topological conjugacy. 

The section 2 describes the technical aspects involved in this work; definitions of nonuniform exponential dichotomy, nonuniform bounded growth and the statement of the first main theorem, which establishes the existence of topological conjugacy between \eqref{lin1} and \eqref{nolin1}.
The details of all the steps necessary to achieve the proof of the first main theorem can be found in section 3.

Latterly, in section 4, the second main theorem of this work is presented, where the conditions that allow us to affirm that the homeomorphism that defines the topological conjugation is, indeed, a diffeomorphism are sought.
The article ends with section 5, focused on remarks about the two main results and a comparison with those in \cite{Jara} that have the same implications.


\section{Preliminaries.}

In the literature devoted to the topological conjugacy problem, it is common to assume that the linear system (\ref{lin1}) has the properties of dichotomy and bounded growth in some sense. On the other hand, it is also assumed that the nonlinear term of (\ref{nolin1}) has boundedness and Lipschitzness type properties.

\subsection{Nonuniform exponential dichotomy and nonuniform bounded growth}
The following two definitions will describe the assumptions that we will impose on system (\ref{lin1}) in this work.

\begin{definition}\textnormal{(\cite{Chu}, \cite{Zhang})}
\label{NUEDD}
The system \textnormal{(\ref{lin1})} has a nonuniform exponential dichotomy on $\R_0^+$ if there exist an invariant projector $P(\cdot)$, constants $K\geq 1$, $\alpha>0$, $\mu\geq 0$ with $\mu<\alpha$ such that
\begin{equation}
\left\{\begin{array}{rcl}
\label{NUED}
P(t)\Phi(t,s)&=&\Phi(t,s)P(s), \quad t, s \in \R_0^+, \\
\left \| \Phi(t,s)P(s) \right \|&\leq& Ke^{-\alpha(t-s)+\mu s},\quad t\geq s, \quad t, s \in \R_0^+,                      \\
\left \| \Phi(t,s)(I-P(s)) \right \|&\leq& Ke^{\alpha(t-s)+\mu s},\quad t\leq s, \quad t, s \in \R_0^+,
\end{array}\right.
\end{equation}
where $K$, $\alpha$ and $\mu$ are called the constants of the nonuniform exponential dichotomy.
\end{definition}

\begin{remark}The property of nonuniform exponential dichotomy deserves a few comments:
\begin{enumerate}
\item[i)] A consequence of the first equation of \eqref{NUED} is that $\dim\ker P(t)=\dim\ker P(s)$ for all $t,s\in \mathbb{R}_{0}^{+};$ this motives that,  in the literature, the projector $P(\cdot)$ is known as invariant projector. 
\item[ii)] If $\mu=0$, we recover the classical exponential dichotomy, also called uniform exponential dichotomy \cite{Palmer,PalmerEqTop}. In this framework, the function $s\mapsto e^{\mu s}$ is also known as the nonuniform part.
\item[iii)] We emphasize that in the literature there are some definitions of nonuniform exponential dichotomy that come without the requirement that $\mu<\alpha$, but there are certain aspects of this theory that require this condition (see \cite{JG} for more details).
\end{enumerate}
\end{remark}

\begin{remark}
We emphasize that in \cite[p.540]{Chu} it is stated that the nonuniform exponential dichotomy is admitted by any linear system with nonzero Lyapunov exponents.
\end{remark}

\begin{remark}
The nonuniform exponential dichotomy on $\mathbb{R}_{0}^{+}$ with a non trivial projector $P(\cdot)$ 
implies that any non zero solution $t\mapsto x(t,t_{0},\xi)=\Phi(t,t_{0})\xi$ can be splitted 
as $x(t,t_{0},\xi)=x^{+}(t,t_{0},\xi)+x^{-}(t,t_{0},\xi)$, where
$$
t\mapsto x^{+}(t,t_{0},\xi)=\Phi(t,t_{0})P(t_{0})\xi \quad
\textnormal{and}  \quad t\mapsto x^{-}(t,t_{0},\xi)=\Phi(t,t_{0})[I-P(t_{0})]\xi
$$  
such that $t\mapsto x^{+}(t,t_{0},\xi)$ is a contraction, which converges nonuniformly exponentially to the origin when $t\to +\infty$ while $t\mapsto x^{-}(t,t_{0},\xi)$ is an expansion, which diverges nonuniformly exponentially. The convergence as well as the divergence are dependent of $t-t_{0}$, namely, the elapsed time between $t$ and $t_{0}$ but are also dependent of the initial time $t_{0}$. In this context, the behavior is not uniform with respect to $t_{0}$. Last but not least,
this ``dichotomic" and ``nonuniform" a\-symp\-to\-tic exponential behavior justifies the name of nonuniform exponential dichotomy.
\end{remark}

Additionally, the Definition \ref{NUEDD} induces the construction of the Green's operator associated to the dichotomy property as the function $\mathcal{G}:\R_{0}^{+}\times\R_{0}^{+}\to M_{n}(\R^{n})$, described by:
\begin{equation}
\label{Green}
    \mathcal{G}(t,s)=\left \{ \begin{array}{rcl}
    \Phi(t,s)P(s),\quad &\textnormal{if}&\; t\geq s\geq 0,\\
    -\Phi(t,s)Q(s),\quad &\textnormal{if}&\; s\geq t\geq 0,
    \end{array}
    \right.
\end{equation}
where $Q(\cdot)=I-P(\cdot)$ is the complementary projector.

\begin{definition} The linear system \textnormal{(\ref{lin1})}  has a nonuniform bounded growth on $\mathbb{R}_{0}^{+}$ if there exist constants $K_{0}\geq1$, $a>0$, and $\varepsilon\geq 0$ such that 
\begin{equation}
\label{NUBG}
\|\Phi(t,s)\|\leq K_0e^{a|t-s|+\varepsilon s},\quad \textnormal{for any}\; t,s\in\mathbb{R}_0^{+},
\end{equation}
where $K_{0}$, $a$ and $\varepsilon$ are called the bounded growth constants.
\end{definition}

\begin{remark}
In the works \cite{Dragicevic} and \cite{Lu} the concept of nonuniform strong exponential dichotomy for the system \eqref{lin1} is introduced, namely, the system \eqref{lin1} admits a nonuniform exponential dichotomy and a nonuniform bounded growth, where $\alpha\leq a$.   
\end{remark}

When (\ref{lin1}) has a uniform strong exponential dichotomy, namely when $\mu=\varepsilon=0$, it is well known that 
$\alpha\leq a$ and we refer to Shi and Xiong \cite[p.823]{Shi}. This property has been generalized to the nonuniform framework in \cite{CHR1} and extended in \cite{JG}, where the relation between the properties of (nonuniform) exponencial dichotomy and bounded growth is described. Nevertheless, we will revisit it in order to make this article self contained:
\begin{lemma}
If the linear system \eqref{lin1} has the properties of nonuniform exponential dichotomy and nonuniform bounded growth on $\mathbb{R}_{0}^{+}$ with constants $(K,\alpha,\mu)$ and $(K_{0},a,\varepsilon)$ respectively, then it follows that 
\begin{equation}
\label{constraints-IH}
a+\max\{\mu,\varepsilon\}\geq \alpha.
\end{equation}
\end{lemma}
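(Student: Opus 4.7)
The strategy is to combine the dichotomy estimates with the bounded growth estimate via the trivial identity $\Phi(s,t)\Phi(t,s) = I$, applied to a suitably chosen nonzero vector living in one of the two invariant subspaces associated to $P(\cdot)$. Since $\dim\mathrm{Im}\,P(s)$ and $\dim\mathrm{Im}\,Q(s)$ are both constant in $s$ by the invariance relation $P(t)\Phi(t,s) = \Phi(t,s)P(s)$, and since $\mathrm{Im}\,P(0)\oplus\mathrm{Im}\,Q(0)=\mathbb{R}^{n}$, at least one of these two subspaces is nontrivial. I would therefore split the argument into two cases and show that the ``stable'' case yields $a+\varepsilon\geq\alpha$ while the ``unstable'' case yields $a+\mu\geq\alpha$; taking the maximum gives the claim.

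\emph{Stable case.} Assume $\mathrm{Im}\,P(0)\neq\{0\}$ and pick a unit vector $v\in\mathrm{Im}\,P(0)$. Since $P(0)v=v$, the second inequality of \eqref{NUED} with $s=0$ reads $|\Phi(t,0)v|\leq K e^{-\alpha t}$ for all $t\geq 0$. From $v=\Phi(0,t)\Phi(t,0)v$ and the bounded growth estimate \eqref{NUBG} applied with the pair $(0,t)$, one obtains
$$
1 = |v| \leq \|\Phi(0,t)\|\,|\Phi(t,0)v| \leq K K_{0}\, e^{(a+\varepsilon-\alpha)t},\qquad t\geq 0,
$$
and letting $t\to+\infty$ forces $a+\varepsilon\geq\alpha$.

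\emph{Unstable case.} Assume $\mathrm{Im}\,Q(s)\neq\{0\}$ (equivalently for one, equivalently for every, $s\geq0$) and, for each $s\geq 0$, pick a unit vector $u_{s}\in\mathrm{Im}\,Q(s)$. The third inequality of \eqref{NUED} with $t=0$ gives $|\Phi(0,s)u_{s}|\leq K e^{(-\alpha+\mu)s}$, and $u_{s}=\Phi(s,0)\Phi(0,s)u_{s}$ together with \eqref{NUBG} yields
$$
1 \leq \|\Phi(s,0)\|\,|\Phi(0,s)u_{s}| \leq K K_{0}\, e^{(a+\mu-\alpha)s},\qquad s\geq 0,
$$
whence $a+\mu\geq\alpha$ by taking $s\to+\infty$. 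Combining the two cases gives $a+\max\{\mu,\varepsilon\}\geq\alpha$, as desired.

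There is no serious obstacle here: the proof is essentially a one-line submultiplicativity estimate once the correct vector is chosen in each subspace. The only point that requires a word of care is the bookkeeping that ensures at least one subspace is nontrivial, which is immediate from the direct-sum decomposition induced by the projector $P(0)$.
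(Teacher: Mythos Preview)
Your argument is correct. Both the stable and unstable cases are handled cleanly: in each case the identity $\Phi(\cdot,\cdot)\Phi(\cdot,\cdot)=I$ applied to a unit vector in the appropriate invariant subspace, combined with the relevant dichotomy bound and the bounded growth estimate, yields an inequality of the form $1\leq KK_{0}e^{ct}$, and the sign of $c$ is forced by letting $t\to\infty$. Since at least one subspace is nonzero, the disjunction $a+\varepsilon\geq\alpha$ or $a+\mu\geq\alpha$ follows, which is exactly $a+\max\{\mu,\varepsilon\}\geq\alpha$.

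The paper proceeds differently: rather than selecting vectors in $\mathrm{Im}\,P(0)$ or $\mathrm{Im}\,Q(s)$, it works entirely at the level of operator norms via $\|I\|=\|\Phi(t,s)[P(s)+Q(s)]\Phi(s,t)\|$, bounds both pieces simultaneously using the dichotomy and the bounded growth, and reaches a contradiction by assuming both $a+\mu<\alpha$ and $a+\varepsilon<\alpha$ and sending $t\to\infty$. The paper's route is a single unified estimate with no case distinction on the projector. Your route, by contrast, is vector-based and case-split, but it actually delivers a bit more: when the projector is nontrivial (both subspaces nonzero), your two cases give \emph{both} $a+\varepsilon\geq\alpha$ and $a+\mu\geq\alpha$, i.e.\ $a+\min\{\mu,\varepsilon\}\geq\alpha$, which is strictly stronger than the stated lemma in that situation.
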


\begin{proof} The proof will be made by contradiction: we will assume that (\ref{constraints-IH}) is not verified and we have that $\alpha >   \max\{\mu,\varepsilon\} + a   $, which implies the inequalities
\begin{equation}
\label{CIH1}
\alpha >  \mu + a   \quad \textnormal{and} \quad 
\alpha >   \varepsilon + a.  
\end{equation}

Firstly, let us consider the case $t\geq s$: by using properties of matrix norms combined with the invariance property, we can deduce that
\begin{displaymath}
\begin{array}{rcl}
\|I\| & = & \|\Phi(t,s)\,\Phi(s,t)\|\\\\
& = & \|\Phi(t,s)[P(s)+Q(s)]\Phi(s,t)\|\\\\
&\leq &  \|\Phi(t,s)P(s)\|\, \|\Phi(s,t)\|+\|\Phi(t,s)\|\,\|\Phi(s,t)Q(t)\|,
\end{array}
\end{displaymath}
and by using the dichotomy and bounded growth properties, we ensure the following estimate:
\begin{displaymath}
\begin{array}{rcl} 
\|I\| &\leq& \displaystyle KK_{0}e^{-\alpha(t-s)+\mu s}e^{a(t-s)+\varepsilon t}+KK_{0}e^{-\alpha(t-s)+\mu t}e^{a(t-s)+\varepsilon s} \\\\
&= & \displaystyle KK_{0}\left[e^{(-\alpha+\varepsilon+a)t}e^{(\mu+\alpha-a)s}+e^{(-\alpha+\mu+a)t}e^{(\alpha-a+\varepsilon)s}\right].
\end{array}
\end{displaymath}

Notice that (\ref{CIH1}) is equivalent to $-\alpha+\mu+a <0$ and $-\alpha+\varepsilon+a<0$. Then letting $t\to +\infty$ leads to
$\|I\|\leq 0$, obtaining a contradiction.

\end{proof}

\begin{remark}
If the linear system \eqref{lin1} has a property of nonuniform bounded growth
on $[0,+\infty)$, an open problem is to determine what additional conditions are necessary to ensure a nonuniform exponential dichotomy
on $[0,+\infty)$. This problem has been solved in the uniform framework, where it was proved that if 
the uniform bounded growth is verified and there exist
$\theta \in (0,1)$ and $T_{\theta}>0$ such that any solution $t\mapsto x(t)$ of \eqref{lin1} verifies
$$
|x(t)|\leq \theta \sup\limits_{|u-t|\leq T_{\theta}}|x(u)| \quad \textnormal{for any $t\geq T_{\theta}$},
$$
then there are exponential dichotomy on $[0,+\infty)$. We refer to \cite[pp. 14-15]{Coppel} and \cite{Palmer2006} for details.
\end{remark}

In parallel, it is usual
in the study of topological linearization to assume that the nonlinear pertubation in (\ref{nolin1}) has certain Lipschitz and boundedness type properties. In particular, throughtout this article, it will be assumed that


\begin{itemize}
    \item [\textbf{(P1)}]
    There exist $L_{f}>0$ and $\theta\geq 0$ such that
    $$
    |f(t,y_1)-f(t,y_2)|\leq L_{f}e^{-\theta t}|y_1-y_2| \quad \textnormal{for any $t\geq 0$ and $y_{1},y_{2}\in \mathbb{R}^{n}$}.
    $$
    \item [\textbf{(P2)}] 
     There exist $M>0$ and $\delta\geq 0$ such that
    $$
    |f(t,y)|\leq Me^{\delta t} \quad \textnormal{for any $t\geq 0$ and $y\in \mathbb{R}^{n}$}.
    $$  
\end{itemize}

\begin{example} An example of function $f$ satisfying \textnormal{\textbf{(P1)}} and \textnormal{\textbf{(P2)}} is given by 
$$
f(t,x)=e^{-\theta t}f_{0}(t,x)+M_{0}e^{\delta t},
$$
where $M_{0}>0$ and $f_{0}\colon \mathbb{R}_{0}^{+}\times \mathbb{R}^{n}\to \mathbb{R}^{n}$ is a bounded function with respect to $(t,x)$ and a Lipschitzian function with respect to $x$ with Lipschitz constant $L_{f_{0}}>0$.
\end{example} 

Despite the existence of a wide range of results devoted to the continuous dependence of the solutions of  (\ref{nolin1}) with respect to the initial conditions, a careful revision of the literature shows that the results are scarce when assuming that (\ref{lin1}) has the property of nonuniform bounded growth. The next result considers this assumption and is tailored for \textbf{(P1)--(P2)}.
\begin{lemma}
\label{continuidadCIy}
Assume that system \eqref{lin1} admits nonuniform bounded growth, properties \textnormal{\textbf{(P1)}} and \textnormal{\textbf{(P2)}} are satisfied and $\varepsilon<\theta$, then for any $s,t\geq0$ and  $\eta,\tilde{\eta}\in\R^{n}$, we have that the solutions of \eqref{nolin1} passing respectively trough $\eta$ and $\tilde{\eta}$ at $s=t$ verify
\begin{equation}
\label{ydiferencia1}
    |y(s,t,\eta)-y(s,t,\tilde{\eta})|\leq 
    K_0 e^{a|t-s|+\varepsilon t}|\eta-\tilde{\eta}|e^{\frac{K_0L_{f}}{\theta-\varepsilon}} 
\end{equation}
\end{lemma}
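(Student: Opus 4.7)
The plan is to apply the variation of constants formula followed by a Gronwall argument, using the hypothesis $\varepsilon<\theta$ to obtain a uniform bound on an exponential integral. The starting point is the representation
$$
y(s,t,\eta)=\Phi(s,t)\eta+\int_{t}^{s}\Phi(s,u)f(u,y(u,t,\eta))\,du,
$$
and likewise for $\tilde{\eta}$. Setting $w(s):=|y(s,t,\eta)-y(s,t,\tilde{\eta})|$, subtracting the two representations, and using \textnormal{\textbf{(P1)}} together with the nonuniform bounded growth bound for $\|\Phi(s,t)\|$ and $\|\Phi(s,u)\|$ yields the integral inequality
$$
w(s)\leq K_{0}e^{a|s-t|+\varepsilon t}|\eta-\tilde{\eta}|+K_{0}L_{f}\left|\int_{t}^{s}e^{a|s-u|+(\varepsilon-\theta)u}w(u)\,du\right|.
$$

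Next I would split into the two cases $s\geq t$ and $s\leq t$ in order to remove the absolute values. In the forward case $s\geq t$, the substitution $v(s):=e^{-as}w(s)$ reduces the estimate to
$$
v(s)\leq K_{0}e^{-at+\varepsilon t}|\eta-\tilde{\eta}|+K_{0}L_{f}\int_{t}^{s}e^{(\varepsilon-\theta)u}v(u)\,du,
$$
and the classical (forward) Gronwall inequality provides
$$
v(s)\leq K_{0}e^{-at+\varepsilon t}|\eta-\tilde{\eta}|\exp\!\left(K_{0}L_{f}\int_{t}^{s}e^{(\varepsilon-\theta)u}\,du\right).
$$
In the backward case $s\leq t$, the same substitution $\tilde{\psi}(s):=e^{as}w(s)$ produces the analogous inequality with the integral running over $[s,t]$, which I would close by the backward version of Gronwall's lemma. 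After undoing the substitution in each case, the $e^{a|s-t|+\varepsilon t}$ factor reappears on the right-hand side.

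To conclude, the assumption $\varepsilon<\theta$ is used to bound the exponential integral uniformly in $s$ and $t$:
$$
\int_{t}^{s}e^{(\varepsilon-\theta)u}\,du\leq \frac{1}{\theta-\varepsilon},\qquad \int_{s}^{t}e^{(\varepsilon-\theta)u}\,du\leq \frac{1}{\theta-\varepsilon},
$$
which immediately produces the factor $\exp\!\bigl(K_{0}L_{f}/(\theta-\varepsilon)\bigr)$ appearing in \eqref{ydiferencia1}. I do not anticipate a real obstacle; the only bookkeeping care needed is keeping the signs in the variation of constants integral and the Gronwall direction consistent across the two cases $s\geq t$ and $s\leq t$, and verifying that the multiplicative change of variable correctly absorbs the $e^{au}$ factor coming from nonuniform bounded growth. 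Property \textnormal{\textbf{(P2)}} is implicit in the background insofar as it ensures forward continuability of solutions so that the integral equation makes sense globally on $\mathbb{R}_{0}^{+}$.
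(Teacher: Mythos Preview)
Your proposal is correct and follows essentially the same route as the paper: variation of constants, the substitution $e^{\pm as}w(s)$ to absorb the exponential growth factor, Gronwall's inequality in each time direction, and finally the bound $\int e^{(\varepsilon-\theta)u}\,du\leq 1/(\theta-\varepsilon)$. The paper treats the case $s\leq t$ first and merely states that $s\geq t$ is analogous, but otherwise the arguments coincide.
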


\begin{proof}
For $s\leq t$, the solution $y(s,t,\eta)$ of the system (\ref{nolin1}) is given by:
\begin{equation}
\label{ysolucion}
    y(s,t,\eta)=\Phi(s,t)\eta-\displaystyle\int_{s}^{t}\Phi(s,r)f(r,y(r,t,\eta))dr.
\end{equation}

By using (\ref{NUBG}) and \textbf{(P1)}, we can deduce that:
\begin{equation*}
    \begin{array}{rcl}
    |y(s,t,\eta)-y(s,t,\tilde{\eta})|&\leq& K_0 e^{a(t-s)+\varepsilon t}|\eta-\tilde{\eta}|
   \\\\
   &&+\displaystyle\int_{s}^{t}K_0e^{a(r-s)+\varepsilon r}L_{f}e^{-\theta r}|y(r,t,\eta)-y(r,t,\tilde{\eta})|dr,
    \end{array}
\end{equation*}
then, multiplying the previous inequality by $e^{as}$ and by using the Gronwall's Lemma, we obtain
\begin{equation*} 
    \begin{array}{rcl}
    e^{as}|y(s,t,\eta)-y(s,t,\tilde{\eta})|&\leq&\displaystyle K_0 e^{(a+\varepsilon) t}|\eta-\tilde{\eta}|e^{\int_{s}^{t}K_0L_{f}e^{(\varepsilon-\theta)r}dr},
    \end{array}
\end{equation*}
and by considering that $\theta-\varepsilon>0$, we have that
\begin{equation*} 
    \begin{array}{rcl}
        |y(s,t,\eta)-y(s,t,\tilde{\eta})|&\leq&K_0 e^{a(t-s)+\varepsilon t}|\eta-\tilde{\eta}|e^{\frac{K_0L_{f}}{\theta-\varepsilon}}.
    \end{array}
\end{equation*}

Similarly, if we consider $t\leq s$, we can ensure that 
\begin{equation*}
    |y(s,t,\eta)-y(s,t,\tilde{\eta})|\leq K_0 e^{a(s-t)+\varepsilon t}|\eta-\tilde{\eta}|e^{\frac{K_0L_{f}}{\theta-\varepsilon}},
\end{equation*}
and consequently, the estimates (\ref{ydiferencia1}) is satisfied.
\end{proof}

\subsection{Topological conjugacy}
A careful revision of the literature concerned with nonuniform exponential dichotomy and applications shows the convenience of working with Banach spaces equipped with weighted norms whose weights are related with the dichotomy and the bounded growth constants.
In this context, given a fixed $b>0$ and a norm $|\cdot|$ on $\mathbb{R}^{n}$, we will consider the Banach space $(\mathcal{A},|\cdot|_{\mathcal{A}})$, where
\begin{displaymath}
\mathcal{A}:=\left\{\phi:\R_{0}^{+}\to\R^{n} \colon \textnormal{$\phi$ is continuous on $\mathbb{R}_{0}^{+}$ and} \,\,
\sup\limits_{t\geq0}e^{-bt}|\phi(t)|<\infty  \right\}
\end{displaymath}
and the norm is defined as follows:
$$
|\phi|_{\mathcal{A}}:=\sup\limits_{t\geq0}e^{-bt}|\phi(t)|.
$$

Based on all of the above, the first main result of this article is as follows.

\begin{theorem}
\label{Teo1}
Let $b>0$ fixed and assume that the linear system \eqref{lin1} has the properties of nonuniform exponential dichotomy on $\mathbb{R}_{0}^{+}$ with constants $(K,\alpha,\mu)$
and nonuniform bounded growth with constants $(K_{0},a,\varepsilon)$. Moreover, assume that the nonlinear perturbed system \eqref{nolin1} satisfies the properties \textnormal{\textbf{(P1)}--\textbf{(P2)}} with constants $L_{f}$, $\theta$ and $\delta$. If the above mentioned constants verify the additional conditions:
\begin{equation}
\label{conditions1}
\begin{array}{l}
    \delta+\mu< \min\{b,\alpha\}.\\
\end{array}
\end{equation}
\begin{equation}
\label{conditions2}
\left\{\begin{array}{l}
     \mu\leq \theta,\\
     \varepsilon<\theta,\\
     \alpha+\mu-\theta+b>0,\\
     \alpha-\mu+\theta-b>0,\\
\end{array}\right.
\end{equation}
\begin{equation}
\label{conditions3}
\begin{array}{l}
     \displaystyle\frac{KL_{f}}{\alpha+\mu-\theta+b}+\frac{KL_{f}}{\alpha-\mu+\theta-b}<1,
\end{array}
\end{equation}
then the systems \eqref{lin1} and \eqref{nolin1} are topologically conjugated. In addition, the homeomorphisms $H_{t}$ and $G_{t}$ are perturbations of the identity explicitely defined by
\begin{equation}
\label{homeos}
G(t,\eta):=\eta+w^{*}(t;(t,\eta))    
\quad \textnormal{and} \quad
H(t,\xi):=\xi+z^{*}(t;(t,\xi)),   
\end{equation}
where 
\begin{equation}
\label{w*}
\begin{array}{rcl}
w^*(t ;(\tau, \eta))&:=&\displaystyle-\int_0^{+\infty} \mathcal{G}(t,s) f(s, y(s, \tau, \eta)) d s
\end{array}
\end{equation}
for some $(\tau, \eta) \in \R_{0}^{+} \times \mathbb{R}^{n}$ 
and, given $(\tau, \xi) \in \R_{0}^{+} \times \mathbb{R}^{n}$, the map $z^{*}(t;(\tau,\xi))$ corresponds to the fixed point of the map $\Gamma_{(\tau, \xi)}: \mathcal{A}\rightarrow \mathcal{A}$ defined by:
\begin{equation}
\label{Gamma}
\begin{array}{rcl}
\Gamma_{(\tau, \xi)} \phi(t)&:=&\displaystyle \int_0^{+\infty} \mathcal{G}(t,s) f(s, x(s, \tau, \xi)+\phi(s)) d s.
\end{array}
\end{equation} 
\end{theorem}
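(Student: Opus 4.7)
The plan is to follow the Palmer--Green function construction adapted to the weighted Banach space $(\mathcal{A},|\cdot|_{\mathcal{A}})$, verifying each item of Definition \ref{eqtopnouniforme} in turn. The proof splits naturally into four blocks: (a) well-posedness of the functions $w^{*}$ and $z^{*}$ that define the candidate homeomorphisms; (b) verification of the conjugacy identities (TC2) by differentiating \eqref{w*} and \eqref{Gamma} along trajectories and using the variation of parameters formula; (c) verification that $H_{t}$ and $G_{t}$ are mutual inverses, hence (TC1); and (d) the unboundedness property (TC3). The conditions \eqref{conditions1}--\eqref{conditions3} are tailored, respectively, to make the improper integral converge inside $\mathcal{A}$, to control the Lipschitz growth of trajectories via Lemma \ref{continuidadCIy}, and to turn $\Gamma_{(\tau,\xi)}$ into a genuine contraction.

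For block (a), I would first estimate $w^{*}$ directly. Splitting the integral in \eqref{w*} at $s=t$ and using \eqref{Green} together with \textbf{(P2)} gives
\begin{equation*}
|w^{*}(t;(\tau,\eta))|\leq KM\int_{0}^{t}e^{-\alpha(t-s)+\mu s}e^{\delta s}\,ds+KM\int_{t}^{+\infty}e^{\alpha(t-s)+\mu s}e^{\delta s}\,ds,
\end{equation*}
which converges thanks to $\delta+\mu<\alpha$ and is $O(e^{(\delta+\mu)t})$. Since $\delta+\mu<b$, the map $t\mapsto w^{*}(t;(\tau,\eta))$ lies in $\mathcal{A}$ and, in particular, is uniformly bounded in $t$ by a constant depending only on $K,M,\alpha,\mu,\delta$. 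For $z^{*}$, I would first show that $\Gamma_{(\tau,\xi)}$ maps $\mathcal{A}$ into $\mathcal{A}$ by the same estimate applied with $f(s,x(s,\tau,\xi)+\phi(s))$ and then prove contraction: using \textbf{(P1)} and the dichotomy,
\begin{equation*}
e^{-bt}|\Gamma_{(\tau,\xi)}\phi_{1}(t)-\Gamma_{(\tau,\xi)}\phi_{2}(t)|\leq KL_{f}\left[\int_{0}^{t}e^{-\alpha(t-s)+\mu s-\theta s+bs}\,ds+\int_{t}^{+\infty}e^{\alpha(t-s)+\mu s-\theta s+bs}\,ds\right]e^{-bt}\cdot e^{bt}|\phi_{1}-\phi_{2}|_{\mathcal{A}},
\end{equation*}
and the four sign conditions in \eqref{conditions2} guarantee integrability of both integrals, producing the contraction constant $KL_{f}/(\alpha+\mu-\theta+b)+KL_{f}/(\alpha-\mu+\theta-b)<1$ by \eqref{conditions3}. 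Banach's theorem then yields the fixed point $z^{*}$, and an identical bound proves $z^{*}$ is uniformly bounded in $t$.

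For block (b) I would differentiate $t\mapsto G(t,y(t,\tau,\eta))=y(t,\tau,\eta)+w^{*}(t;(\tau,\eta))$ using the fundamental property $\partial_{t}\mathcal{G}(t,s)=A(t)\mathcal{G}(t,s)+[P(t)+Q(t)]\delta_{s=t}$ built into \eqref{Green}, obtaining $\Phi(t,\tau)G(\tau,\eta)$, which is exactly \eqref{conj2}. The identity \eqref{conjugacion} follows analogously after observing that the fixed point equation for $z^{*}$ reads $z^{*}(t;(\tau,\xi))=\int_{0}^{+\infty}\mathcal{G}(t,s)f(s,x(s,\tau,\xi)+z^{*}(s;(\tau,\xi)))\,ds$ and that translating time along the linear flow transforms $\Gamma_{(\tau,\xi)}$ consistently. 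The key observation here is that, because $x(\cdot,\tau,\xi)$ is itself a solution of \eqref{lin1}, the composition with $z^{*}$ produces a solution of \eqref{nolin1}. Property (TC3) is then immediate from the uniform bound $|H(t,\xi)-\xi|\leq C$ established in block (a).

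The main obstacle, and the place where I would devote the most care, is block (c): proving $H_{t}\circ G_{t}=\mathrm{id}$ and $G_{t}\circ H_{t}=\mathrm{id}$. The standard device is a uniqueness-of-fixed-point argument: for fixed $(t_{0},\xi)\in\mathbb{R}_{0}^{+}\times\mathbb{R}^{n}$, consider the function $\phi(t):=H(t,x(t,t_{0},\xi))-x(t,t_{0},\xi)-\Phi(t,t_{0})[H(t_{0},\xi)-\xi]$ restricted appropriately; one shows that two different candidates for $z^{*}$ both satisfy the equation defining the fixed point of $\Gamma$, so contraction forces equality. An analogous computation using \eqref{conjugacion} and \eqref{conj2} together with the uniqueness of the fixed point of $\Gamma_{(\tau,H(\tau,\xi))}$ (where the zero function is easily seen to be a fixed point after composing) identifies the two maps as mutual inverses. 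The delicate point is verifying that the candidate functions built from $H\circ G$ and $G\circ H$ indeed lie in $\mathcal{A}$ and satisfy the required functional equation, for which the uniform boundedness of $w^{*}$ and $z^{*}$ from block (a) is essential.
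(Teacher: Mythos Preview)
Your overall architecture matches the paper's: block~(a) corresponds to Sections~3.1--3.2 there, block~(b) to Section~3.3, block~(d) to Section~3.4, and block~(c) to Section~3.5. The contraction estimate you wrote down is exactly the one in the paper, and the fixed-point uniqueness device you describe for $H_{t}\circ G_{t}=\mathrm{id}$ is the same idea the authors implement with the auxiliary function $\omega(t)=H[t,G[t,y(t,\tau,\eta)]]-y(t,\tau,\eta)$.

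There is, however, a genuine gap: you never address the \emph{continuity} of $\eta\mapsto G(t,\eta)$ and $\xi\mapsto H(t,\xi)$. Your block~(c) only establishes that $H_{t}$ and $G_{t}$ are mutual set-theoretic inverses, but \textbf{(TC1)} requires $H_{t}$ to be a homeomorphism. In the paper this is Section~3.6, and it is precisely where the nonuniform bounded growth hypothesis and Lemma~\ref{continuidadCIy} are actually used: the continuity of $\eta\mapsto w^{*}(t;(t,\eta))$ is obtained by splitting the integral over $[0,t]\cup[t,t+L]\cup[t+L,+\infty)$, estimating the first two pieces via \textbf{(P1)} and the bound \eqref{ydiferencia1} on $|y(s,t,\eta)-y(s,t,\tilde\eta)|$ (this is the only place $K_{0},a,\varepsilon$ and the condition $\varepsilon<\theta$ enter the argument), and making the tail small using \textbf{(P2)} and $\delta+\mu<\alpha$. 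You mention Lemma~\ref{continuidadCIy} in your opening paragraph as controlling ``Lipschitz growth of trajectories'', but it plays no role in any of your blocks (a)--(d) as written; without it, the bounded growth assumption in the theorem statement is never invoked and the proof is incomplete.

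A minor point: your claim that $w^{*}$ and $z^{*}$ are ``uniformly bounded in $t$'' is not quite right. Your own estimate gives $|w^{*}(t;(\tau,\eta))|=O(e^{(\delta+\mu)t})$, which grows in $t$ when $\delta+\mu>0$. What you actually obtain (and what the paper uses) is boundedness in the weighted norm $|\cdot|_{\mathcal{A}}$, i.e.\ $e^{-bt}|w^{*}(t)|\leq C$; this is enough for \textbf{(TC3)} at each fixed $t$, but the phrasing should be corrected.
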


The proof of this result will be carried out in the next section. On the other hand, and despite the vast literature focused on linearization homeomorphisms, the above result deserves some remarks:

First of all, we stress that in previous works, the property of nonuniform bounded growth has not been assumed except in the works \cite{CRM2,Dragicevic}, where a related
property is considered. In addition, the majority of the previous works assume that
$t\mapsto \|A(t)\|$ is bounded, which is a particular case of uniform bounded growth.

The consideration of the nonuniformities in the dichotomy and bounded growth has some technical consequences:

\noindent $\bullet$ If $\max\{\mu,\varepsilon\}=\mu$, then (\ref{constraints-IH}) and (\ref{conditions1}) imply that $a>\delta$.

\noindent $\bullet$ The conditions (\ref{conditions2}) can be verified for any $\theta\geq \mu$ such that 
\begin{equation*}
\max\{\mu+b-\alpha,\varepsilon\}<\theta<\mu+b+\alpha.    
\end{equation*}

\noindent $\bullet$ The condition (\ref{conditions3}) is equivalent to
\begin{displaymath}
2\alpha K L_{f} <(\alpha+\mu-\theta+b)(\alpha-\mu+\theta-b).   
\end{displaymath}

\noindent $\bullet$ 
The parameters $\theta$, $b$ and $L_{f}$ are those that have a certain degree of freedom to be chosen, as long as they satisfy the conditions (\ref{conditions1}), (\ref{conditions2}) and (\ref{conditions3}).

Last but not least, a careful reading of the proof will show us that hypotheses (\ref{conditions1}), (\ref{conditions2}) and (\ref{conditions3}) imply that
\begin{equation}
\label{Green1}
\int_{0}^{\infty}\|\mathcal{G}(t,s)\|_{\mathcal{A}}e^{\delta s}\,ds < +\infty
\quad \textnormal{and} \quad
\int_{0}^{\infty}\|\mathcal{G}(t,s)\|_{\mathcal{A}}e^{-\theta s}\,ds < 1,
\end{equation}
which are reminiscent to (\ref{GVNL}), by considering $\|\cdot\|_{\mathcal{A}}$ as the operator norm induced by $|\cdot|_{\mathcal{A}}$. Nevertheless,
we emphasize a strong difference: we are working with estimations defined in terms with 
the nonuniform bounded growth constants and we are working with a norm arising from a ``parameterized" Banach space.


\section{Proof of Theorem \ref{Teo1}}

The construction of the homeomorphisms based in the exponential dichotomy essentially follows the lines of the Palmer's construction. Nevertheless, the explicit assumptions on the dichotomy and bounded growth properties induced technical differences.

The subsections 3.1 and 3.2 respectively describe properties of the maps $w^{*}$ and $z^{*}$ which will be useful to prove the topological conjugacy properties. The condition \textbf{(TC2)} is verified in the subsection 3.3  while \textbf{(TC3)} is verified in subsection 3.4.  Finally, the property
\textbf{(TC1)} will be verified in two subsections: the bijectivity in 3.5 and the continuity in 3.6.

\subsection{About the map $(t,\tau,\eta)\mapsto w^{*}(t;(\tau,\eta))$.} As before, let $t \mapsto y(t, \tau, \eta)$ be solution of (\ref{nolin1}) passing through $\eta$ at $t=\tau$. By using variation of parameters combined with the Green's operator defined in (\ref{Green}), it is straightforward to verify that (\ref{w*}) is the unique solution of the initial value problem:
\begin{equation*}
\left\{\begin{aligned} \dot{w} &=A(t) w-f(t, y(t, \tau, \eta)) \\ w(0) &=\int_{0}^{+\infty} \Phi(0, s)Q(s) f(s, y(s, \tau, \eta)) d s. \end{aligned}\right.
\end{equation*}

It is important to emphasize that $w(0)$ is well defined due to the exponential dichotomy property with $0\leq s$ combined with \textbf{(P2)} and (\ref{conditions1}). In addition, the identity 
$y(s,r,y(r,\tau,\eta))=y(s,\tau,\eta)$ for any $s,\; r,\; \tau\geq 0$ is ensured for the uniqueness of the solutions and besides
implies the property
\begin{equation*}
w^*(t ;(\tau, \eta))=w^*(t ;(r, y(r, \tau, \eta))) \quad \text { for any } r \geq 0. 
\end{equation*}

\subsection{About the map  $\Gamma_{(\tau, \xi)}: \mathcal{A}\rightarrow \mathcal{A}$.} We can see that the map $\Gamma_{(\tau,\xi)}$ described by (\ref{Gamma}) can be written as follows:
\begin{equation*}
\begin{array}{rcl}
\Gamma_{(\tau, \xi)} \phi(t)&=&\displaystyle\int_0^t \Phi(t, s)P(s) f(s, x(s, \tau, \xi)+\phi(s)) \,ds\\\\
&& \displaystyle -\int_t^{+\infty} \hspace{-0.4cm}\Phi(t, s)Q(s) f(s, x(s, \tau, \xi)+\phi(s)) d s\\\\
&=&\Gamma_{+} \phi(t)+\Gamma_{-} \phi(t).
\end{array}
\end{equation*} 

Now, by using the exponential dichotomy estimates combined with \textbf{(P2)} and (\ref{conditions1}), we can deduce that for any $t\geq0$:
\begin{equation*}
\begin{array}{rcl}
|\Gamma_{+} \phi(t)|&\leq&\displaystyle\int_0^t Ke^{-\alpha(t-s)+\mu s}Me^{\delta s} d s\leq KMe^{-\alpha t}\displaystyle\int_0^t e^{(\alpha+\delta+\mu)s}\,ds,
\end{array}
\end{equation*}
which leads to
\begin{equation*}
\begin{array}{rcl}
e^{-bt}|\Gamma_{+} \phi(t)|&\leq&\displaystyle\frac{KM}{\alpha+\delta+\mu}e^{(\delta+\mu-b)t}.
\end{array}
\end{equation*} 

Similarly, we obtain that for any $t\geq0$:
\begin{equation*}
\begin{array}{rcl}
|\Gamma_{-} \phi(t)|&\leq&\displaystyle\int_t^{+\infty} Ke^{\alpha(t-s)+\mu s}Me^{\delta s} d s\leq KMe^{\alpha t}\displaystyle\int_t^{+\infty} e^{(-\alpha+\delta+\mu)s}\, ds,
\end{array}
\end{equation*}
and we have
\begin{equation*}
\begin{array}{rcl}
e^{-bt}|\Gamma_{-} \phi(t)|&\leq&\displaystyle\frac{KM}{\alpha-(\delta+\mu)}e^{(\delta+\mu-b)t}.
\end{array}
\end{equation*} 

Now, by considering (\ref{conditions1}), we conclude that the operator $\Gamma_{(\tau,\xi)}$ is well defined due that:
\begin{equation}
\label{Operador_A}
\begin{array}{rcl}
|\Gamma_{(\tau,\xi)} \phi|_{\mathcal{A}}&\leq&\displaystyle\frac{KM}{\alpha+\delta+\mu}+\frac{KM}{\alpha-(\delta+\mu)}<+\infty.
\end{array}
\end{equation}

Additionally, the operator is a contraction. In fact, by using the estimates of the exponential dichotomy, \textbf{(P1)} and the last two condition of (\ref{conditions2}), allow us to deduce that:
\begin{equation*}
\begin{array}{rcl}
|\Gamma_{(\tau,\xi)}\phi_1(t)-\Gamma_{(\tau,\xi)}\phi_2(t)|&\leq&|\Gamma_{+} \phi_1(t)-\Gamma_{+} \phi_2(t)|+|\Gamma_{-} \phi_1(t)-\Gamma_{-} \phi_2(t)|\\
&\leq&\displaystyle\int_0^{t} Ke^{-\alpha(t-s)+\mu s}L_{f}e^{-\theta s}|\phi_1(s)-\phi_2(s)| d s\\
&+&\displaystyle\int_{t}^{+\infty} Ke^{\alpha(t-s)+\mu s}L_{f}e^{-\theta s}|\phi_1(s)-\phi_2(s)| d s\\
&\leq& KL_{f}e^{-\alpha t}\displaystyle\int_0^{t} e^{(\alpha+\mu-\theta+b)s}e^{-bs}|\phi_1(s)-\phi_2(s)| d s\\
&+& KL_{f}e^{\alpha t}\displaystyle\int_t^{+\infty} e^{(-\alpha+\mu-\theta+b)s}e^{-bs}|\phi_1(s)-\phi_2(s)| d s\\
&\leq&\displaystyle\frac{KL_{f}}{\alpha+\mu-\theta+b}e^{(\mu-\theta+b)t}|\phi_1-\phi_2|_{\mathcal{A}}\\
&+&\displaystyle\frac{KL_{f}}{\alpha-\mu+\theta-b}e^{(\mu-\theta+b)t}|\phi_1-\phi_2|_{\mathcal{A}},\\
\end{array}
\end{equation*}
thus, from the definition of the norm $|\cdot|_{\mathcal{A}}$ and (\ref{conditions3}), we get the following estimation
\begin{equation*}
    |\Gamma_{(\tau,\xi)}\phi_1-\Gamma_{(\tau,\xi)}\phi_2|_{\mathcal{A}}\leq\displaystyle\left [\frac{KL_{f}}{\alpha+\mu-\theta+b}+\frac{KL_{f}}{\alpha-\mu+\theta-b}\right ]|\phi_1-\phi_2|_\mathcal{A}, 
\end{equation*}
and we can conclude that the operator $\Gamma_{(\tau,\xi)}$ is a contraction. By using the Banach fixed point theorem it follows the existence and uniqueness of a fixed point satisfying the identity
\begin{equation}
\label{z*}
\begin{array}{rcl}
z^*(t ;(\tau, \xi))&=&\displaystyle\int_0^{+\infty} \mathcal{G}(t,s) f(s, x(s, \tau, \xi)+z^*(s ;(\tau, \xi))) d s.
\end{array}
\end{equation}

It is deserved to be mentioned that the above fixed point can be seen as the unique solution of the following 
nonlocal
initial value problem:

\begin{equation} 
\label{sistemaz}
\left\{\begin{aligned} \dot{z} &=A(t) z+f(t, x(t, \tau, \xi)+z) \\ z(0) &= -\int_0^{+\infty} \Phi(0, s)Q(s) f(s, x(s, \tau, \xi)+z(s))\, ds. \end{aligned}\right.
\end{equation}

Moreover, due to the identity $\Phi(s,r)\Phi(r,\tau)=\Phi(s,\tau)$, for any $s,\; r,\; \tau\geq0$, we have that $x(s,r,x(r,\tau,\xi))=x(s,\tau,\xi)$ and the characterization of the fixed point as solution of the initial value problem (\ref{sistemaz}), implies the property
\begin{equation}
\label{z*propiedad}
z^*(t ;(\tau, \xi))=z^*(t ;(r, x(r, \tau, \xi))) \quad \text { for any } r \geq 0.
\end{equation}


\subsection{The property (TC2) is verified}

By using (\ref{homeos}), (\ref{Gamma}) and (\ref{z*propiedad}), we can see that
$$
\begin{array}{rcl}
H[t, x(t, \tau, \xi)] &=&\displaystyle x(t, \tau, \xi)+\int_0^{+\infty} \mathcal{G}(t,s) f(s, x\left(s, t, x(t, \tau, \xi)+z^*(s ;(t, x(t, \tau, \xi)))\right) d s\\\\
&=&\displaystyle x(t, \tau, \xi)+\int_0^{+\infty} \mathcal{G}(t,s) f(s, x(s, t, x(t, \tau, \xi)+z^*(s ;(\tau, \xi))\, ds,

\end{array}
$$
which is equivalent to alternative characterizations. If fact, as $z^{*}$ is a fixed point of (\ref{Gamma}), the above equation can be seen as
\begin{equation}
\label{CAH1}
H[t,x(t,\tau,\xi)]=x(t,\tau,\xi)+z^{*}(t;(\tau,\xi)).
\end{equation}

In addition, upon inserting (\ref{CAH1}) in the integral term of $H[t,x(t,\tau,\xi)]$ leads to
\begin{equation}
\label{CAH2}
H[t,x(t,\tau,\xi)]=x(t,\tau,\xi)+
\int_{0}^{\infty}\mathcal{G}(t,s)f(s,H[s,x(s,\tau,\xi)])\;ds.
\end{equation}

By derivating (\ref{CAH1}) we can deduce the identity
$$
\begin{array}{rcl}
\displaystyle\frac{\partial}{\partial t} H[t, x(t, \tau, \xi)] &=&\displaystyle \frac{\partial}{\partial t} x(t, \tau, \xi)+\frac{\partial}{\partial t} z^*(t ;(\tau, \xi)) \\\\
&=&\displaystyle A(t) x(t, \tau, \xi)+A(t) z^*(t ;(\tau, \xi))+f(t, H[t, x(t, \tau, \xi)]) \\\\
&=&\displaystyle A(t) H[t, x(t, \tau, \xi)]+f(t, H[t, x(t, \tau, \xi)]),
\end{array}
$$
then $t \mapsto H[t, x(t, \tau, \xi)]$ is solution of (\ref{nolin1}) passing through $H(\tau, \xi)$ at $t=\tau$. As consequence of uniqueness of solutions we obtain the identity (\ref{conjugacion}).

Similarly, it can be proved that $t \mapsto G[t, y(t, \tau, \eta)]$ is solution of (\ref{lin1}) passing through $G(\tau, \eta)$ at $t=\tau$ and (\ref{conj2}) is verified.

\subsection{The property (TC3) is verified.}
It follows from the definition of $H$ that
\begin{equation*}
\begin{array}{rcl}
\displaystyle\sup_{t\geq0}e^{-bt}|H(t, \xi)-\xi|&=&|z^{*}|_{\mathcal{A}}<+\infty.
\end{array}
\end{equation*}

A similar inequality can be obtained for $|G(t, \eta)-\eta|$ since if we follow the lines of the estimation (\ref{Operador_A}), then we can conclude that $|w^{*}|_{\mathcal{A}}<+\infty$, and therefore 
\begin{equation*}
\begin{array}{rcl}
\displaystyle\sup_{t\geq0}e^{-bt}|G(t, \eta)-\eta|&=&|w^{*}|_{\mathcal{A}}<+\infty.
\end{array}
\end{equation*}


\subsection{The property (TC1) is verified: Bijectivity.}
Firstly, we will show that $H$ is bijective for any $t \geq 0$. We will start by showing that $H(t, G(t, \eta))=\eta$, for any $\eta\in\mathbb{R}^{n}$ and $t \geq 0$.
Indeed, by using the identity
$$
G[t,y(t,\tau,\eta)]=x(t,\tau,G(\tau,\eta)),
$$
verified in the previous subsection combined with (\ref{CAH2}) allows to see that
\begin{displaymath}
 \begin{array}{rcl}
H[t,G[t,y(t,\tau,\eta)]]&=&H[t,x(t,\tau,G(t,\eta)] \\\\
&=& \displaystyle x(t,\tau,G(\tau,\eta))+\int_{0}^{+\infty}\mathcal{G}(t,s)f(s,H[s,x(s,\tau,G(\tau,\eta))])\,ds \\\\
&=& \displaystyle 
G[t,y(t,\tau,\eta)]+\int_{0}^{+\infty}\mathcal{G}(t,s)f(s,H[s,G[s,y(s,\tau,\eta))])\,ds.\end{array}
\end{displaymath}

Now, by using the definition of $G$ stated in (\ref{homeos}) together with (\ref{w*}), we can see that the above identity becomes
\begin{displaymath}
\begin{array}{rcl}
H[t, G[t, y(t, \tau, \eta)]]&=& y(t, \tau, \eta)\\\\
&+&\displaystyle\int_0^{+\infty}\mathcal{G}(t,s)\{f\left(s, H[s,G[s,y(s,\tau,\eta)]))\right)-f(s,y(s,\tau,\eta))\}\,ds.
\end{array}
\end{displaymath}

Let $\omega(t)=H[t, G[t, y(t, \tau, \eta)]]-y(t, \tau, \eta)$. Then, a consequence of dichotomy properties and \textbf{(P1)}, we have that
$$
\begin{array}{rcl}
|\omega(t)| & \leq &\displaystyle\left | \int_0^t \Phi(t, s)P(s)\left\{f\left(s, H[s,G[s,y(s,\tau, \eta)]\right)-f(s, y(s, \tau, \eta))\right\}\;ds\right | \\\\
&+&\displaystyle\left | \int_t^{+\infty} \Phi(t, s)Q(s)\left\{f\left(s,H[s,G[s,y(s,\tau, \eta)]\right)-f(s, y(s, \tau, \eta))\right\}\;ds\right | \\\\
& \leq& \displaystyle \int_0^t Ke^{-\alpha(t-s)+\mu s}L_{f}e^{-\theta s}|\omega(s)|\, ds +   \int_t^{+\infty} Ke^{\alpha(t-s)+\mu s}L_{f}e^{-\theta s}|\omega(s)|\, ds.
\end{array}
$$

Therefore, we obtain the following estimates:
$$
\begin{array}{rcl}
|\omega(t)| &\leq& K L_{f} \displaystyle\int_0^t e^{-\alpha(t-s)+\mu s} e^{-\theta s}|\omega(s)| d s + K L_{f} \displaystyle\int_t^{+\infty} e^{\alpha(t-s)+\mu s} e^{-\theta s}|\omega(s)| d s\\
&\leq& K L_{f} e^{-\alpha t}\displaystyle\int_0^t e^{(\alpha+\mu-\theta+b) s} |\omega|_{\mathcal{A}} d s + K L_{f}e^{\alpha t} \displaystyle\int_t^{+\infty} e^{-(\alpha-\mu+\theta-b) s} |\omega|_{\mathcal{A}} d s,
\end{array}
$$
and multiplying this inequality by $e^{-bt}$ and taking supremum over $t\geq0$ we have that
$$
\begin{array}{rcl}
e^{-bt}|\omega(t)|&\leq&\displaystyle\left \{\frac{KL_{f}}{\alpha+\mu-\theta+b}+\frac{KL_{f}}{\alpha-\mu+\theta-b}\right\}e^{-(\theta-\mu)t}|\omega|_{\mathcal{A}}\\\\
|\omega|_{\mathcal{A}}&\leq&\displaystyle\left \{\frac{KL_{f}}{\alpha+\mu-\theta+b}+\frac{KL_{f}}{\alpha-\mu+\theta-b}\right\}|\omega|_{\mathcal{A}}.
\end{array}
$$

Based on (\ref{conditions3}), it follows that $|\omega(t)|=|H[t, G[t, y(t, \tau, \eta)]]-y(t, \tau, \eta)|=0$ for any $t \geq 0$. In particular, when we take $t=\tau$, we obtain $H(\tau, G(\tau, \eta))=\eta$.

\bigskip

Now, we will prove that $G(t, H(t, \xi))=\xi$, for any $\xi\in\mathbb{R}^{n}$ and $t\geq0$. In fact, based on the definition of $G$ stated in (\ref{homeos}), the equation (\ref{CAH2}) and the identities
$$
y(s,t,H[t,x(t,\tau,\xi)])=y(s,t,y(t,\tau,H(\tau,\xi)))=y(s,\tau,H(\tau,\xi))
=H[s,x(s,\tau,\xi)],
$$
which are consequence of the property
(\ref{conjugacion}) that was verified in the subsection 3.3, we can deduce 
that
$$
\begin{array}{rcl}
G[t, H[t, x(t, \tau, \xi)]]&=& H[t, x(t, \tau, \xi)] -\displaystyle\int_0^{+\infty} \mathcal{G}(t,s)f(s, y(s, t, H[t, x(t, \tau, \xi)])) d s\\
&=& x(t, \tau, \xi)+\displaystyle\int_0^{+\infty} \mathcal{G}(t,s) f\left(s, H[s, x(s, \tau, \xi)])\right) d s\\
& &\displaystyle -\int_0^{+\infty} \mathcal{G}(t,s) f\left(s, H[s, x(s, \tau, \xi)])\right) d s\\
&=& x(t, \tau, \xi)
\end{array}
$$
and taking $t=\tau$ leads to $G(\tau,H(\tau,\xi))=\xi$. As consequence, for any $t\geq0$, $H_{t}$ is a bijection with $G_{t}$ as its inverse.

\subsection{The property (TC1) is verified: Continuity.}
Let us recall that, for any fixed $t\geq 0$, the maps $\eta \mapsto G(t,\eta)$
and $\xi \mapsto H(t,\xi)$ can be seen as perturbations of the identity. In consequence, we only have to prove that $\xi \mapsto z^{*}(t;(t,\xi))$ and $\eta \mapsto w^{*}(t;(t,\eta))$ are continuous for any fixed $t\geq 0$.
\medskip

\noindent\textbf{$\bullet$ Continuity of the map $\xi \mapsto z^{*}(t;(t,\xi))$.}

If we consider a sequence $\{\xi_{n}\}_{n\in\mathbb{N}}\subset\R^{n}$ such that $\xi_{n}\to\xi$ when $n\to+\infty$, for any fixed $t\geq0$ and $\phi\in\mathcal{A}$, we define the sequence of functions
\begin{equation*}
    \ell_{n}(s):=\mathcal{G}(t,s)f(s,x(s,t,\xi_n)+\phi(s)).
\end{equation*}

It is straightforward to prove that, due that the continuity of the maps $\varsigma\mapsto f(s,\varsigma)$ and $\varsigma\mapsto x(s,t,\varsigma)$, for any fixed $s,\; t\geq0$, we have that
$$
\begin{array}{rcl}
\displaystyle \lim_{n\to+\infty}\ell_{n}(s)&=&\displaystyle\lim_{n\to+\infty} \mathcal{G}(t,s)f(s,x(s,t,\xi_n)+\phi(s))\\
&=&\mathcal{G}(t,s)f(s,x(s,t,\xi)+\phi(s)).
\end{array}
$$

Now we need to prove that $s\mapsto\mathcal{G}(t,s)f(s,x(s,t,\xi_n)+\phi(s))\in (L^{1}([0,+\infty)),|\cdot|_{\mathcal{A}})$. Indeed, by using the nonuniform exponential dichotomy, \textbf{(P2)} and (\ref{conditions1}), leads to the following estimation:
\begin{equation*}
\begin{array}{rcl}    |\Gamma_{(t,\xi_{n})}\phi(p)|&\leq&\displaystyle\int_{0}^{+\infty}\left \|\mathcal{G}(t,s)f(s,x(s,t,\xi_n)+\phi(s))\right \|ds\\
    &\leq& \displaystyle\int_{0}^{t}Ke^{-\alpha(t-s)+\mu s} Me^{\delta s}ds+\displaystyle\int_{t}^{+\infty}Ke^{\alpha(t-s)+\mu s} Me^{\delta s}ds\\
    &\leq&\displaystyle KMe^{-\alpha t}\frac{1}{\alpha+\mu+\delta}\left [e^{t(\alpha+\mu+\delta)}-1\right ]\\
    &&\displaystyle +KMe^{\alpha t}\frac{1}{\alpha-(\mu+\delta)}e^{-(\alpha-(\mu+\delta))t}\\
    &\leq&\displaystyle KM\left [ \frac{1}{\alpha+\mu+\delta}+\frac{1}{\alpha-(\mu+\delta)}\right ]e^{(\mu+\delta)t},\\    
    \end{array}
\end{equation*}
then we have that
\begin{equation*}
|\Gamma_{(t,\xi_{n})}\phi|_{\mathcal{A}}\leq\displaystyle\sup_{t\geq0}KM\left [ \frac{1}{\alpha+\mu+\delta}+\frac{1}{\alpha-(\mu+\delta)}\right ]e^{(\mu+\delta-b)t}<+\infty.
\end{equation*}

Therefore, for any fixed $t\geq0$ and by considering the norm $|\cdot|_\mathcal{A}$, the Dominated convergence theorem ensures that:
\begin{equation*}
\displaystyle\lim_{n\to+\infty}\Gamma_{(t,\xi_n)} \phi(p)=\Gamma_{(t,\xi)} \phi(p).
\end{equation*}

This means that given $\rho>0$, there exists $N\in\mathbb{N}$ such that for any $n\geq N$ we have that
\begin{equation*}
    \begin{array}{rcl}
    |\Gamma_{(t,\xi_n)} \phi-\Gamma_{(t,\xi)} \phi|_{\mathcal{A}}&<&\rho,\\
    \end{array}
\end{equation*}
thus, for any $t\geq0$ fixed, we can see that
$$\begin{array}{rcl}
|\Gamma_{(t,\xi_n)} \phi(p)-\Gamma_{(t,\xi)} \phi(p)|&=& e^{bt}e^{-bt}|\Gamma_{(t,\xi_n)} \phi-\Gamma_{(t,\xi)}\phi|\\
&\leq&e^{bt}|\Gamma_{(t,\xi_n)} \phi-\Gamma_{(t,\xi)}\phi|_{\mathcal{A}}<\rho e^{bt}.
\end{array}$$

Then, by using (\ref{z*}) we can consider $z^{*}(t,(t,\xi_n))$ and $z^{*}(t,(t,\xi))$ as the respective fixed points of $\Gamma_{(t,\xi_n)}$ and $\Gamma_{(t,\xi)}$, which allow to deduce that:
\begin{equation*}
    |z^{*}(t,(t,\xi_n))-z^{*}(t,(t,\xi))|<\tilde{\rho}=\rho e^{bt}
\end{equation*}
and the continuity follows.
\medskip

\noindent\textbf{$\bullet$ Continuity of the map $\eta \mapsto w^{*}(t,(t,\eta))$.}

\noindent We need to prove that 
$$\forall \rho>0,\; \exists \lambda(t,\rho)>0:\; |\eta-\tilde{\eta}|<\lambda\Rightarrow |w^{*}(t,(t,\eta))-w^{*}(t,(t,\tilde{\eta}))|<\rho.$$ 

In order to estimate $|w^{*}(t,(t,\eta))-w^{*}(t,(t,\tilde{\eta}))|$, it is straightforward to deduce that, for any $L>0$:
\begin{equation*}
    \begin{array}{rcl}
       |w^{*}(t,(t,\eta))-w^{*}(t,(t,\tilde{\eta}))|&\leq&\displaystyle\underbrace{\int_0^t |\Phi(t, s)P(s) \{f(s, y(s, \tau, \eta))-f(s, y(s, \tau, \tilde{\eta}))\}| d s}_{I_1(t)}\\\\
       &+&\displaystyle\underbrace{\int_{t}^{t+L} |\Phi(t, s)Q(s) \{f(s, y(s, \tau, \eta))-f(s, y(s, \tau, \tilde{\eta}))\}| d s}_{I_2(t)}\\\\
       &+&\displaystyle\underbrace{\int_{t+L}^{+\infty} |\Phi(t, s)Q(s) \{f(s, y(s, \tau, \eta))-f(s, y(s, \tau, \tilde{\eta}))\}| d s}_{I_3(t)}.
    \end{array}
\end{equation*}

For $I_1(t)$, by using the nonuniform exponential dichotomy estimates, \textbf{(P1)} and Lemma  \ref{continuidadCIy}, we obtain the following estimation: 

\begin{equation*}
    \begin{array}{rcl}
    I_1(t)&\leq&\displaystyle\int_0^t Ke^{-\alpha(t-s)+\mu s}L_{f}e^{-\theta s} |y(s, \tau, \eta)-y(s, \tau, \tilde{\eta})|ds\\
    &\leq&|\eta-\tilde{\eta}|KL_{f}K_0\displaystyle\int_0^t e^{-\alpha(t-s)+\mu s}e^{-\theta s}e^{a(t-s)+\varepsilon t}e^{\frac{K_0L_{f}}{\theta-\varepsilon}}ds\\
    &\leq&|\eta-\tilde{\eta}|\underbrace{e^{\frac{K_0L_f}{\theta-\varepsilon}}KL_{f}K_0e^{t(-\alpha+a+\varepsilon)}\displaystyle\int_0^t e^{(\alpha+\mu-\theta-a)s}ds}_{\psi_1(t)}.
    \end{array}
\end{equation*}



In the similar way as before, we have the following estimate for $I_2(t)$: 
\begin{equation*}
    \begin{array}{rcl}    
    I_2(t)&\leq&|\eta-\tilde{\eta}|\underbrace{e^{\frac{K_0L_f}{\theta-\varepsilon}}KL_{f}K_0e^{t(\alpha-a+\varepsilon)}\displaystyle\int_t^{t+L} e^{(-\alpha+\mu-\theta+a)s}ds}_{\psi_2(t)}.
    \end{array}
\end{equation*}

On the other hand, by using dichotomy estimates, \textbf{(P2)} and (\ref{conditions1}), we obtain the following estimation for $I_3(t)$: 

\begin{equation*}
    \begin{array}{rcl}    I_3(t)&\leq&\displaystyle\int_{t+L}^{+\infty}Ke^{\alpha(t-s)+\mu s}2Me^{\delta s}ds=2KMe^{\alpha t}\displaystyle\int_{t+L}^{+\infty}e^{(-\alpha+\mu+\delta)s}ds\\
    &\leq&\displaystyle 2KMe^{\alpha t}\frac{1}{\alpha -(\mu+\delta)}e^{-(\alpha - (\mu+\delta))(t+L)}\\
    &\leq&\displaystyle 2KMe^{(\mu+\delta)t}\frac{1}{\alpha -(\mu+\delta)}e^{-(\alpha-(\mu+\delta))L}.
    \end{array}
\end{equation*}

Finally, for any fixed $t\geq0$, the condition (\ref{conditions1}) implies that for any $\rho>0$, there exists $L>0$ such that $I_3(t)\leq \frac{\rho}{3}$. Now, if we consider 
$$\lambda(t,\rho)=\frac{\rho}{3\max\{\psi_1(t),\psi_2(t)\}},$$
then the continuity follows.

\subsection{Alternative characterizations for $G$ and $H$}
In the work \cite{Castaneda3}, the authors have pointed out some alternative characterizations
for the homeomorphisms associated to the topological conjugacy in the discrete case, which can be easily generalized to the continuous framework.

In fact, notice that the identities (\ref{homeos}) combined with the bijectivity of $\eta \mapsto G(\tau,\eta)$ and $\xi \mapsto H(t,\xi)$ for any fixed $\tau\geq 0$
implies that:
\begin{equation*}
\left\{\begin{array}{l}
w^{*}(\tau;(\tau,\eta))+z^{*}(\tau;(\tau,G(\tau,\eta)))=0\\\\
w^{*}(\tau;(\tau,H(\tau,\xi)))+z^{*}(\tau;(\tau,\xi))=0,
\end{array}\right.
\end{equation*}
which, combined again with the identities (\ref{homeos}) provides the alternative characterizations of $G$ and $H$ as fixed points:
\begin{equation*}
G(\tau,\eta)=\eta-z^{*}(\tau;(\tau,G(\tau,\eta))) \quad
\textnormal{and} \quad
H(\tau,\xi)=\xi-w^{*}(\tau;(\tau,H(\tau,\xi))).
\end{equation*}

Last but not least, another alternative characterization for the map $\eta \mapsto G(s,\eta)$
was provided also in \cite[Eq. 2.19]{Castaneda3} for the discrete case and extended
for the continuous case in \cite[Eq. 3.6] {Jara}. This characterization stands out for its great
simplicity, namely:
\begin{equation}
\label{caeq}
G(s,\eta)=\displaystyle\Phi(s,0)\left \{ y(0,s,\eta)+w^{*}(0;(s,\eta))\right\}.
\end{equation}

The characterization (\ref{caeq}) will be useful to address the study of differentiability of the topological congugacy, which is the topic of the next section.

\section{Smoothness of topological conjugacy}

A result of R. Plastock \cite[Cor.2.1]{Plastock}, which provides necessary and sufficient conditions ensuring
the existence of a $\mathbb{R}^{n}$--diffeormorphism, has been recurrently employed in order to deduce smoothness properties of the topological conjugacy between systems \eqref{lin1} and \eqref{nolin1}. Since the homeomorphism $\eta\mapsto G(t,\eta)$ defined by \eqref{homeos} is a perturbation of the identity, the analysis of the norm of its Jacobian matrix is indispensable to use this result. In this context, we invite the reader to see the works \cite{Castaneda3, CRM, CRM2, CT, Jara} to review the scope of this tool.

\subsection{Technical results and smoothness property of the topological conjugacy} The main focus of this section corresponds to the study of the smoothness properties for the homeomorphism $\eta \mapsto G(t,\eta)$, which sends solutions of system (\ref{lin1}) into the system (\ref{nolin1}).
Based on this, we will consider the following additional hypothesis:

\begin{itemize}
    \item [\textbf{(P3)}]
    The function $x\mapsto f(t,x)$ and its partial derivative with respect to $x$, denoted by $\partial_2 f$, are continuous with respect to $(t,x)$.
\end{itemize}

\begin{remark}
    The condition \textnormal{\textbf{(P1)}} combined with \textnormal{\textbf{(P3)}} implies that, for any fixed $t\geq0$ and $x\in\mathbb{R}^{n}$, we have the estimation:
    \begin{equation}
     \label{cotaderivada}
     \|\partial_2 f(t,x)\|=\sup\limits_{\eta \in \mathbb{R}^{n}, |\eta|=1}\frac{\left|\partial_2 f(t,x)\eta\right|}{|\eta|}\leq L_{f}e^{-\theta t}.
    \end{equation}
\end{remark}

Some technical results that will pave the way to the second main result of this paper will be stated and proved below.

\begin{lemma}\label{estimacionderivaY}
If the linear system \eqref{lin1} admits nonuniform bounded growth with constants $(K_0, a, \varepsilon)$, the condition  \textnormal{\textbf{(P1)}} is satisfied with $\varepsilon<\theta$ then, for any $s,t\geq0$, the solution of \textnormal{(\ref{nolin1})} $t\mapsto y(t,\tau,\eta)$ verifies:
\begin{equation}
\label{yparcial}
    \displaystyle \left\|\frac{\partial y}{\partial \eta}(s,t,\eta)\right\| \leq\displaystyle K_0 e^{a|s-t|+\varepsilon t} e^{\frac{L_f K_0}{\theta-\varepsilon}}.
\end{equation}    
\end{lemma}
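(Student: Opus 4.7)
The plan is to mirror the proof of Lemma \ref{continuidadCIy}, replacing finite differences by partial derivatives. Under hypothesis \textnormal{\textbf{(P3)}}, classical results on smooth dependence of ODE solutions on initial data ensure that $\eta\mapsto y(s,t,\eta)$ is continuously differentiable; differentiating the variation-of-parameters representation
$$y(s,t,\eta)=\Phi(s,t)\eta+\int_{t}^{s}\Phi(s,r)\,f(r,y(r,t,\eta))\,dr$$
termwise with respect to $\eta$ then produces the variational integral equation satisfied by $Y(s):=(\partial y/\partial\eta)(s,t,\eta)$, namely
$$Y(s)=\Phi(s,t)+\int_{t}^{s}\Phi(s,r)\,\partial_{2}f(r,y(r,t,\eta))\,Y(r)\,dr,$$
which is valid for both $s\ge t$ and $s\le t$.

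The next step is to take matrix norms and invoke the nonuniform bounded growth estimate \eqref{NUBG} together with the pointwise bound $\|\partial_{2}f(r,\cdot)\|\le L_{f}e^{-\theta r}$ coming from \eqref{cotaderivada}. Splitting into the two cases $s\ge t$ and $s\le t$, I would multiply by an exponential weight $e^{-as}$ (respectively $e^{as}$) in order to neutralise the $e^{a|s-t|}$ factor produced by the growth estimate, reducing the problem in each case to a linear integral inequality with non-negative kernel $K_{0}L_{f}\,e^{(\varepsilon-\theta)r}$ and constant term $K_{0}e^{(\varepsilon+a)t}$ (up to signs of exponents).

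The decisive input is the assumption $\varepsilon<\theta$, which makes this kernel absolutely integrable on $[0,+\infty)$ with total mass $1/(\theta-\varepsilon)$. Applying Gronwall's lemma forward in time in the case $s\ge t$, and the reverse-time version of Gronwall in the case $s\le t$, therefore yields in both cases the uniform amplification factor $\exp\bigl(K_{0}L_{f}/(\theta-\varepsilon)\bigr)$. Undoing the exponential weight restores the $e^{a|s-t|+\varepsilon t}$ prefactor and produces exactly the bound \eqref{yparcial}. The only mildly delicate point is the bookkeeping in the case $s\le t$, where the direction of the Gronwall inequality must be reversed; however this is precisely the step already performed in the proof of Lemma \ref{continuidadCIy}, so I do not expect a genuine obstacle here.
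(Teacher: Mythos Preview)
Your proposal is correct and follows essentially the same approach as the paper: differentiate the variation-of-parameters formula to obtain the variational equation for $Y(s)=\partial y/\partial\eta(s,t,\eta)$, bound the integrand using \eqref{NUBG} and \eqref{cotaderivada}, multiply by the weight $e^{as}$ (in the case $s\le t$), apply Gronwall's lemma, and use $\varepsilon<\theta$ to control the exponent. The paper carries this out explicitly only for $s\le t$ and declares the other case analogous, exactly as you anticipate.
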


\begin{proof}
From the solution given by the equation (\ref{ysolucion}), for $s\leq t$ we have that

$$\displaystyle \frac{\partial y}{\partial \eta}(s, t, \eta)=\displaystyle\Phi(s, t)-\int_s^t \Phi(s, r) \partial_2 f(r, y(r, t, \eta)) \frac{\partial y}{\partial \eta}(r, t, \eta) d r.$$

By considering the nonuniform bounded growth property described by \eqref{NUBG} and the inequality \eqref{cotaderivada}, we obtain the following estimation:

$$\displaystyle \left \|\frac{\partial y }{\partial \eta}(s, t, \eta)\right \|\leq\displaystyle K_0 e^{a(t-s)+\varepsilon t}+\int_s^t K_0 e^{a(r-s)+\varepsilon r } L_{f} e^{-\theta r}\left \| \frac{\partial y}{\partial \eta}(r, t, \eta)\right \|  d r,$$
then by multiplying the above expression by $e^{as}$ and by using the Gronwall's Lemma we deduce that 

$$\displaystyle e^{a s} \left \|\frac{\partial y }{\partial \eta}(s, t, \eta)\right \|\leq\displaystyle K_0 e^{(a+\varepsilon) t} e^{\frac{L_f K_0}{\theta-\varepsilon}},$$
and the result follows since for the case $t \leq s$ we can do it in a similar way.
\end{proof}

\begin{lemma}
\label{Gdif}
Assume that the linear system \eqref{lin1} has the properties of nonuniform exponential dichotomy 
and nonuniform bounded growth with constants $(K, \alpha, \mu)$ and $(K_0, a, \varepsilon)$, respectively. Moreover, assume that the nonlinear system \eqref{nolin1} has a perturbation $f$ satisfying the properties \textnormal{\textbf{(P1)}},\textnormal{\textbf{(P2)}} and \textnormal{\textbf{(P3)}}. If the dichotomy and bounded growth constants verify \eqref{conditions1}, \eqref{conditions2}, \eqref{conditions3} and the additional condition
\begin{equation}\label{condition4}
    \alpha-\mu+\theta-a>0,
\end{equation}
then the topological conjugacy between the systems \eqref{lin1} and \eqref{nolin1} has the properties that for any fixed $t\geq 0$, the map $\eta \mapsto G(t,\eta)$ is differentiable.
\end{lemma}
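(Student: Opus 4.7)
The plan is to differentiate directly under the integral sign in the formula $w^{*}(t;(t,\eta)) = -\int_{0}^{+\infty}\mathcal{G}(t,s)f(s,y(s,t,\eta))\,ds$ and verify that the resulting candidate
$$
D_{\eta}w^{*}(t;(t,\eta)) \;=\; -\int_{0}^{+\infty}\mathcal{G}(t,s)\,\partial_{2}f(s,y(s,t,\eta))\,\frac{\partial y}{\partial \eta}(s,t,\eta)\,ds
$$
is well defined and is indeed the Fréchet derivative. Since $G(t,\eta)=\eta+w^{*}(t;(t,\eta))$ is a perturbation of the identity, this immediately gives differentiability of $\eta\mapsto G(t,\eta)$.

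The first step is to show the candidate integral converges. I would split $[0,+\infty)=[0,t]\cup[t,+\infty)$ and combine three bounds: the dichotomy estimates on $\|\mathcal{G}(t,s)\|$ from \eqref{Green}, the inequality $\|\partial_{2}f(s,\cdot)\|\leq L_{f}e^{-\theta s}$ from \eqref{cotaderivada}, and the estimate for $\|\partial y/\partial\eta\|$ from Lemma \ref{estimacionderivaY}. On $[0,t]$ the integrand is bounded by a constant multiple of $e^{(\alpha-a+\mu-\theta)s}$ (times a fixed exponential in $t$), so integrability on a bounded interval is automatic. On $[t,+\infty)$ the product collapses to a constant times $e^{(\alpha-a+\varepsilon)t}e^{-(\alpha-\mu+\theta-a)s}$, and this decays in $s$ precisely because the new hypothesis \eqref{condition4} asserts $\alpha-\mu+\theta-a>0$. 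This is exactly the role played by \eqref{condition4}: it is the minimal extra condition needed to tame the expansion of $\partial y/\partial\eta$ in the unstable direction, where the growth rate $a$ of the evolution now has to compete with the dichotomy exponent $\alpha$.

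The second step is to justify differentiation under the integral via dominated convergence. For $h\in\mathbb{R}^{n}$ with $|h|$ small I write, by the mean value theorem applied twice,
$$
f(s,y(s,t,\eta+h))-f(s,y(s,t,\eta)) = \int_{0}^{1}\!\!\!\int_{0}^{1}\partial_{2}f(s,\xi_{\lambda,h}(s))\,\frac{\partial y}{\partial\eta}(s,t,\eta+\sigma h)\,h\,d\sigma\,d\lambda,
$$
with $\xi_{\lambda,h}(s)$ lying on the segment joining $y(s,t,\eta)$ and $y(s,t,\eta+h)$. Bounding the inner factor uniformly in $h$ (for $|h|\le 1$) by the same product used in step one produces an integrable majorant independent of $h$. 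By Lemma \ref{continuidadCIy} we have $y(s,t,\eta+\sigma h)\to y(s,t,\eta)$ pointwise in $s$ as $h\to 0$, and \textbf{(P3)} then gives pointwise convergence of the integrand to $\mathcal{G}(t,s)\partial_{2}f(s,y(s,t,\eta))\frac{\partial y}{\partial\eta}(s,t,\eta)h$. Dominated convergence yields
$$
\lim_{|h|\to 0}\frac{w^{*}(t;(t,\eta+h))-w^{*}(t;(t,\eta))-D_{\eta}w^{*}(t;(t,\eta))h}{|h|}=0,
$$
which is the required Fréchet differentiability.

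The main obstacle is the tail $s\geq t$ of the candidate derivative integral: the Green function carries a factor $e^{\alpha(t-s)+\mu s}$ while $\partial y/\partial\eta$ grows like $e^{a(s-t)+\varepsilon t}$, so in the unstable direction the two exponentials partly cancel, and without the strict inequality $\alpha-\mu+\theta-a>0$ the integral would diverge. Once this estimate is in place, everything else is standard: the majorant for the difference quotient is obtained from exactly the same computation, and the passage to the limit under the integral proceeds through dominated convergence using the continuity of $\partial_{2}f$ and the continuous dependence given by Lemma \ref{continuidadCIy}.
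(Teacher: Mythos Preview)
Your proposal is correct and follows essentially the same route as the paper: formal differentiation under the integral sign, verification that the candidate derivative is a convergent integral (with \eqref{condition4} controlling the unstable tail), and a dominated convergence argument to pass to the limit. The paper differs only in two cosmetic choices: it first rewrites $G$ via the characterization \eqref{caeq} so as to differentiate $\eta\mapsto w^{*}(0;(s,\eta))$ (where only the $Q$-part of the Green function appears), and it bounds the difference quotient directly through \textbf{(P1)} and Lemma~\ref{continuidadCIy} rather than through your mean-value integral representation. Your version is in fact slightly cleaner, since the double-integral representation yields a majorant that is uniform in $h$ immediately, whereas the paper has to treat the compact piece $[0,s]$ by a separate pointwise-convergence argument.
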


\begin{proof}
    
In order to show that the map $\eta \mapsto G(t,\eta)$ is differentiable, it will be useful to work with the alternative characterization stated in (\ref{caeq}), namely 
\begin{equation*}
G(s,\eta)=\displaystyle\Phi(s,0)\left \{ y(0,s,\eta)+w^{*}(0;(s,\eta))\right\},
\end{equation*}
which implies that it is enough to show that the map $\eta \mapsto w^{*}(0,(s, \eta))$ is differentiable, since the condition \textbf{(P3)} implies that the map $\eta \mapsto y(t,s, \eta)$ is differentiable for any $(t,s)$ and we refer to \cite[Ch. V]{Hartman}  
and \cite[Th.6.1, p.89]{Sideris} for details. 

Firstly, by considering \eqref{w*}, we will prove that for any $s\in\mathbb{R}_{0}^{+}$, $\eta\in\mathbb{R}^{n}$, the following linear map is well defined:
$$
Dw^{*}(0;(s,\eta))=-\displaystyle\int_0^{+\infty} \mathcal{G}(0, r) \partial_{2} f(r, y(r, s, \eta)) \frac{\partial y}{\partial \eta}(r, s, \eta)\,dr.
$$

In fact, for any fixed $s\in\mathbb{R}_0^{+}$, $\eta\in\mathbb{R}^{n}$ and by using \eqref{NUED}, \eqref{cotaderivada}, \eqref{yparcial} and \eqref{condition4}, we ensure that
$$\begin{array}{rcl}
\left \|Dw^{*}(0;(s,\eta)) \right \|&\leq&\displaystyle\int_0^{s}Ke^{-\alpha r +\mu r}L_{f}e^{-\theta r}K_0 e^{a(s-r)+\varepsilon s} e^{\frac{L_f K_0}{\theta-\varepsilon}}\;dr\\
&+&\displaystyle\int_s^{+\infty}Ke^{-\alpha r +\mu r}L_{f}e^{-\theta r}K_0 e^{a(r-s)+\varepsilon s} e^{\frac{L_f K_0}{\theta-\varepsilon}}\;dr\\
&=&\displaystyle KK_0L_{f}e^{\frac{L_f K_0}{\theta-\varepsilon}}e^{(a+\varepsilon)s}\int_{0}^{s}e^{-(\alpha-\mu+\theta+a)r}\;dr\\
&+&\displaystyle KK_0L_{f}e^{\frac{L_f K_0}{\theta-\varepsilon}}e^{(-a+\varepsilon)s}\int_{s}^{+\infty}e^{-(\alpha-\mu+\theta-a)r}\;dr\\
&\leq& \displaystyle \frac{KK_0L_{f}e^{\frac{L_f K_0}{\theta-\varepsilon}}}{\alpha-\mu+\theta+a}e^{(a+\varepsilon)s}\left(1-e^{-(\alpha-\mu+\theta+a)s}\right)\\
&+&\displaystyle \frac{KK_0L_{f}e^{\frac{L_f K_0}{\theta-\varepsilon}}}{\alpha-\mu+\theta-a}e^{(-a+\varepsilon)s}e^{-(\alpha-\mu+\theta-a)s}<+\infty.
\end{array}$$

Secondly, we know that $\eta \mapsto w^{*}(0;(s,\eta))$ is differentiable if:
\begin{displaymath}
\lim\limits_{\delta\to 0}\frac{w^{*}(0;(s,\eta+\delta))-w^{*}(0;(s,\eta))-Dw^{*}(0;(s,\eta))}{|\delta|}=0,
\end{displaymath}
which is equivalent to:
\begin{equation*}
\lim\limits_{j\to+\infty}\Psi(s,\eta)=\lim\limits_{j\to +\infty}\int_{0}^{\infty}\varphi_{j}(r)\,dr=0,
\end{equation*}
where $r\mapsto \varphi_{j}(r)$ is given by:
\begin{displaymath}
\varphi_{j}(r):=\mathcal{G}(0, r) \left(\frac{f\left(r, y\left(r, s, \eta+\delta_j\right)\right)-f(r, y(r, s, \eta))-\partial_{2} f(r, y(r, s, \eta)) \frac{\partial y}{\partial \eta}(r, s, \eta) \delta_j}{\left|\delta_j\right|}\right)
\end{displaymath}
and
\begin{displaymath}
\Psi(s,\eta):=\int_{0}^{\infty}\varphi_{j}(r)\,dr,
\end{displaymath}
while $\{\delta_{j}\}_{j}$ is a sequence of vectors different from zero which converges to the origin. Notice that, a direct consequence of the condition \textbf{(P3)}, is the pointwise convergence:
$$
\lim _{j\to +\infty} \varphi_j(r)=0. 
$$



Now, our objective is prove that, by using the dominated convergence theorem, we have
\begin{equation}
\label{varphiTCD}
\displaystyle\lim_{j\to+\infty}\int_{0}^{+\infty}\varphi_{j}(r)\; dr=\displaystyle\int_{0}^{+\infty}\lim_{j\to+\infty}\varphi_{j}(r)\; dr=0.
\end{equation}

In order to achieve this, we will prove that the sequence $(\varphi_j)_{j\in\mathbb{N}}$ is dominated by an integrable function in the variable $r$. Indeed, by combining \textbf{(P1)}
and \eqref{cotaderivada}, we deduce that
\begin{equation}
\label{phi_j}
\begin{aligned}
\left|\varphi_j(r)\right|& \leq\|\mathcal{G}(0, r)\| \left(\frac{L_f e^{-\theta r}\left|y\left(r, s, \eta+\delta_j\right)-y(r, s, \eta)\right|+L_f e^{-\theta r}\left|\frac{\partial y}{\partial \eta}(r, s, \eta) \delta_j\right|}{\left|\delta_j\right|}\right) \\
& = \|\mathcal{G}(0, r)\| L_f e^{-\theta r}\left(\frac{\left|y\left(r, s, \eta+\delta_j\right)-y(r, s, \eta)\right|}{\left|\delta_j\right|}+\left\|\frac{\partial y}{\partial \eta}(r, s, \eta)\right\|\right) .
\end{aligned}
\end{equation}
\newpage
The following is a breakdown of our analysis in two cases:

\noindent \textbf{Case 1): $s\leq r$.} 

By using Lemma \ref{continuidadCIy} and for all $j\in\mathbb{N}$ we have that:
$$
\begin{array}{rcl}
\displaystyle \frac{|y(r,s,\eta+\delta_j)-y(r,s,\eta)|}{|\delta_j|}&\leq& 
\displaystyle K_0 e^{a(r-s)+\varepsilon s}e^{\frac{L_fK_0}{\theta-\varepsilon}}.
\end{array}
$$

 The above estimation combined with Lemma \ref{estimacionderivaY} leads to
$$
\begin{aligned}
\left|\varphi_j(r)\right| & \leq \|\mathcal{G}(0, r)\| L_f e^{-\theta r}\left(K_0 e^{a(r-s)+\varepsilon s}e^{\frac{L_{f}K_0}{\theta-\varepsilon}}+ \displaystyle K_0 e^{a(r-s)+\varepsilon s} e^{\frac{L_f K_0}{\theta-\varepsilon}}\right) \\
&\leq 2 Ke^{-\alpha r+\mu r} L_f e^{-\theta r} K_0 e^{a(r-s)+\varepsilon s}e^{\frac{L_{f}K_0}{\theta-\varepsilon}}.
\end{aligned}
$$

\noindent \textbf{Case 2): $r\leq s$.}
When we work on the compact set $[0, s]$ and recalling that the map $\eta \mapsto y(r, s, \eta)$ is a class $\mathcal{C}^{1}$ map, we can observe that the sequence of continuous functions 
$$r \mapsto \frac{\left|y\left(r, s, \eta+\delta_j\right)-y(r, s, \eta)\right|}{\left|\delta_j\right|} \textnormal{ converges pointwise to } r \mapsto\left\|\frac{\partial y}{\partial \eta}(r, s, \eta)\right\|$$ 
when $j \rightarrow +\infty$, which is also a continuous function. Then, given $\rho>0$, for any fixed $r\in[0,s]$ there exist $\hat{j}:=\hat{j}(\rho,r) \in \mathbb{N}$ such that for any $j \geq \hat{j}$ we have that:
$$
\frac{\left|y\left(r, s, \eta+\delta_j\right)-y(r, s, \eta)\right|}{\left|\delta_j\right|}+\left\|\frac{\partial y}{\partial \eta}(r, s, \eta)\right\| \leq 2\left\|\frac{\partial y}{\partial \eta}(r, s, \eta)\right\|+\rho .
$$

Therefore, given $\rho=1$, for any fixed $r\in[0,s]$ and $j \geq \hat{j}=\hat{j}(1,r)$, by considering (\ref{yparcial}) we have the following estimation for (\ref{phi_j}):
$$
\begin{aligned}
\left|\varphi_j(r)\right| & \leq\|\mathcal{G}(0, r)\| L_f e^{-\theta r}\left(\frac{\left|y\left(r, s, \eta+\delta_j\right)-y(r, s, \eta)\right|}{\left|\delta_j\right|}+\left\|\frac{\partial y}{\partial \eta}(r, s, \eta)\right\|\right) \\
& \leq Ke^{-\alpha r+\mu r}L_f e^{-\theta r}\left(2\left\|\frac{\partial y}{\partial \eta}(r, s, \eta)\right\|+1\right)\\
&\leq Ke^{-\alpha r+\mu r}L_f e^{-\theta r} \left (2\displaystyle K_0 e^{a(s-r)+\varepsilon s} e^{\frac{L_fK_0}{\theta-\varepsilon}}+1\right ).
\end{aligned}
$$

Summarizing, for any fixed $s\in\mathbb{R}_0^{+}$ if we define $\mathcal{F}: \mathbb{R}_{0}^{+} \rightarrow \mathbb{R}$ by
$$
\mathcal{F}(r)=\left\{\begin{array}{cc}
KL_fe^{(-\alpha+\mu-\theta) r} \left (2\displaystyle K_0 e^{a(s-r)+\varepsilon s} e^{\frac{L_{f}K_0}{\theta-\varepsilon}}+1\right )  & s \geq r \geq 0, \\
2 KL_fK_0e^{(-\alpha +\mu-\theta )r} e^{a(r-s)+\varepsilon s}e^{\frac{L_{f}K_0}{\theta-\varepsilon}} & r > s,
\end{array}\right.
$$
therefore, we have prove that for every $j \geq \hat{j}(1,r)$, $\left|\varphi_j(r)\right| \leq \mathcal{F}(r)$. Based on the above, we will prove that $\mathcal{F}\in L^{1}(\mathbb{R}_{0}^{+})$. In fact, by considering (\ref{condition4}), we can deduce that for any fixed $s\in\mathbb{R}_0^{+}$:
$$
\begin{array}{rcl}
\displaystyle\int_0^{+\infty} \mathcal{F}(r) d r &= &\displaystyle e^{\frac{L_fK_0}{\theta-\varepsilon}}2KK_0L_f e^{(a+\varepsilon) s}\int_{0}^{s}e^{(-\alpha+\mu-\theta-a)r} dr+KL_f\int_{0}^{s}e^{(-\alpha+\mu-\theta)r} dr\\
&+& \displaystyle 2 KK_0L_fe^{\frac{L_{f}K_0}{\theta-\varepsilon}} e^{(-a+\varepsilon) s}\int_{s}^{+\infty} e^{(-\alpha +\mu-\theta+a )r} dr\\
&\leq&\displaystyle e^{\frac{L_fK_0}{\theta-\varepsilon}}2KK_0L_f e^{(a+\varepsilon)s}\frac{1}{\alpha-\mu+\theta+a} +KL_f \frac{1}{\alpha-\mu+\theta}\\
&+& \displaystyle e^{\frac{L_{f}K_0}{\theta-\varepsilon}}2KK_0L_f e^{(-a+\varepsilon)s}\frac{e^{-(\alpha-\mu+\theta-a)s}}{\alpha-\mu+\theta-a}\\
&\leq&\displaystyle \frac{e^{\frac{L_fK_0}{\theta-\varepsilon}}2KK_0L_f}{\alpha-\mu+\theta+a} e^{(a+\varepsilon)s} +\frac{KL_f}{\alpha-\mu+\theta}\\
&+& \displaystyle \frac{e^{\frac{L_{f}K_0}{\theta-\varepsilon}}2KK_0L_f}{\alpha-\mu+\theta-a} e^{-(\alpha-\mu+\theta-\varepsilon))s}<+\infty. 
\end{array}
$$

As consequence, by using dominated convergence theorem we can ensure that \eqref{varphiTCD} is satisfied. Therefore, based on the definitions of $\psi_{j}(s,\eta)$, $\varphi_{j}(r)$ and the equation \eqref{varphiTCD}, we have that
$$\displaystyle\lim_{j\to+\infty}\Psi_{j}(s,\eta)=\lim_{j\to+\infty}\int_{0}^{+\infty}\varphi_{j}(r)\;dr=\int_{0}^{+\infty}\lim_{j\to+\infty}\varphi_{j}(r)\;dr=0,$$
then $\eta \mapsto w^*(0 ;(s, \eta))$ is differentiable and the lemma follows.
\end{proof}

\begin{remark}
By using the definition of $\eta \mapsto G(t,\eta)$ described by \eqref{homeos} and by following the lines of the previous proof, we can prove that    

\begin{equation}
\label{JacoG}
\displaystyle \frac{\partial G}{\partial \eta}(t,\eta)=\displaystyle I-\underbrace{\int_{0}^{+\infty}\mathcal{G}(t,s)\partial_{2}f(s,y(s,t,\eta))\frac{\partial y}{\partial \eta}(s,t,\eta)ds}_{=: \Lambda (t,\eta)},
\end{equation}
which is well defined. 
\end{remark}

\subsection{Diffeomorphism result} By considering the term $\Lambda(t,\eta)$ defined in (\ref{JacoG}) and in order to obtain an estimation for any fixed $t\in\mathbb{R}^{+}_{0}$ of this term, the following theorem will guarantee under which conditions the function $\eta\mapsto G(t,\eta)$ is a diffeomorphism.

\begin{theorem}
\label{Teo2}
Assume that the linear system \eqref{lin1} has the properties of nonuniform exponential dichotomy 
and nonuniform bounded growth with constants $(K, \alpha, \mu)$ and $(K_0, a, \varepsilon)$, respectively. Moreover, assume that the nonlinear system \eqref{nolin1} has a perturbation $f$ satisfying the properties \textnormal{\textbf{(P1)}}, \textnormal{\textbf{(P2)}} and \textnormal{\textbf{(P3)}} such that \eqref{conditions1}, \eqref{conditions2}, \eqref{conditions3} are satisfied. If the additional estimations are satisfied:
\begin{equation}
    \label{thetacondition1}
        \max\{\mu+\alpha-a,\mu-(\alpha-a)\}<\theta,
    \end{equation}
    \begin{equation}
    \label{thetacondition2}
        \mu+\varepsilon\leq\theta,
    \end{equation}
    \begin{equation}
    \label{thetacondition3}
        KL_fK_0e^{\frac{L_f K_0}{\theta-\varepsilon}}\displaystyle\max\left\{\frac{1}{\theta-(\alpha-a)-\mu},\frac{1}{\theta+(\alpha-a)-\mu}\right\}\leq \frac{1}{c},
    \end{equation}
    with $c>2$, then there exists an interval $[0,\tilde{t}_{c})$ where $\eta\mapsto G(t,\eta)$ is a $C^{1}$-diffeomorphism for any $t\in[0,\tilde{t}_{c})$.

\end{theorem}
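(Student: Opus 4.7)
The plan is to invoke Plastock's global inverse function theorem (referenced at the start of Section 4) in the form tailored to perturbations of the identity: since $G(t,\eta)=\eta+w^{*}(t;(t,\eta))$, a sufficient condition for $\eta\mapsto G(t,\eta)$ to be a $C^{1}$-diffeomorphism of $\mathbb{R}^{n}$ onto itself is that it be $C^{1}$ and that the Jacobian $I-\Lambda(t,\eta)$ given by \eqref{JacoG} be invertible with inverse bounded uniformly in $\eta$. Both demands are delivered at once by a uniform estimate $\|\Lambda(t,\eta)\|\leq k(t)<1$, since then the Neumann series yields $\|(I-\Lambda(t,\eta))^{-1}\|\leq (1-k(t))^{-1}$, also uniform in $\eta$.

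For the $C^{1}$ character of $G(t,\cdot)$, differentiability is precisely Lemma \ref{Gdif}. The continuity of $\eta\mapsto\Lambda(t,\eta)$ reduces, via the dominated convergence theorem, to two facts already in our hands: the continuity of the integrand in $\eta$ (consequence of \textbf{(P3)} and the standard $C^{1}$-dependence of $y$ and $\partial_{\eta}y$ on the initial data, as invoked in the proof of Lemma \ref{Gdif}), and the existence of an $\eta$-independent integrable majorant, which will be produced by the next step.

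The central computation is the estimate of $\|\Lambda(t,\eta)\|$. Splitting the integral in \eqref{JacoG} at $s=t$ and applying the dichotomy bounds \eqref{NUED}, the inequality \eqref{cotaderivada} for $\partial_{2}f$, and Lemma \ref{estimacionderivaY}, and then collecting exponents, one obtains
\begin{equation*}
\|\Lambda(t,\eta)\|\leq C\,e^{(-\alpha+a+\varepsilon)t}\int_{0}^{t}e^{(\alpha-a+\mu-\theta)s}\,ds+C\,e^{(\alpha-a+\varepsilon)t}\int_{t}^{+\infty}e^{(-\alpha+a+\mu-\theta)s}\,ds,
\end{equation*}
where $C:=KK_{0}L_{f}e^{L_{f}K_{0}/(\theta-\varepsilon)}$. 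Condition \eqref{thetacondition1} is exactly what forces both exponents in $s$ to be strictly negative, so the tails converge and, bounding the first integral by $(\theta-(\alpha-a)-\mu)^{-1}$, one arrives at
\begin{equation*}
\|\Lambda(t,\eta)\|\leq \frac{C}{\theta-(\alpha-a)-\mu}\,e^{(-\alpha+a+\varepsilon)t}+\frac{C}{\theta+(\alpha-a)-\mu}\,e^{(\mu+\varepsilon-\theta)t}.
\end{equation*}

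To finish, I apply \eqref{thetacondition2} (which gives $e^{(\mu+\varepsilon-\theta)t}\leq 1$) and \eqref{thetacondition3} (which bounds both coefficients by $1/c$), obtaining the uniform bound $\|\Lambda(t,\eta)\|\leq \tfrac{1}{c}\bigl(e^{(-\alpha+a+\varepsilon)t}+1\bigr)$. At $t=0$ this evaluates to $\tfrac{2}{c}<1$ precisely because $c>2$, and by continuity in $t$ there is a maximal $\tilde t_{c}>0$ (equal to $+\infty$ when $-\alpha+a+\varepsilon\leq 0$ and otherwise a positive number determined by the equation $\tfrac{1}{c}(e^{(-\alpha+a+\varepsilon)t}+1)=1$) such that the bound remains strictly below one on $[0,\tilde t_{c})$. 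On this interval, $I-\Lambda(t,\eta)$ is invertible with inverse uniformly bounded in $\eta$, so Plastock's theorem yields the $C^{1}$-diffeomorphism conclusion. The main obstacle is the unavoidable growing exponential $e^{(-\alpha+a+\varepsilon)t}$, whose sign is not pinned down by the hypotheses (only the relation \eqref{constraints-IH} is available): this is precisely what restricts the conclusion to an interval $[0,\tilde t_{c})$ rather than all of $\mathbb{R}_{0}^{+}$, and its interplay with the decaying factor $e^{(\mu+\varepsilon-\theta)t}$ in the second integrand is exactly the role of condition \eqref{thetacondition2}.
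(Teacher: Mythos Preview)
Your proposal is correct and follows essentially the same route as the paper: both split $\Lambda(t,\eta)$ at $s=t$, apply the dichotomy bounds together with \eqref{cotaderivada} and Lemma \ref{estimacionderivaY}, integrate using \eqref{thetacondition1}, and then use \eqref{thetacondition2}--\eqref{thetacondition3} to reach the bound $\tfrac{1}{c}\bigl(e^{(-\alpha+a+\varepsilon)t}+1\bigr)$ before invoking Plastock. The only cosmetic difference is that the paper explicitly recalls \textbf{(TC3)} as the coercivity ingredient for Plastock's corollary, whereas you phrase the same hypothesis via the uniform Neumann-series bound on $(I-\Lambda)^{-1}$; you might also note, as the paper does, that \eqref{thetacondition1} contains \eqref{condition4}, so Lemma \ref{Gdif} is indeed applicable.
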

\begin{proof}
Firstly, let us recall that $\eta \mapsto G(t,\eta)$ verifies the property \textbf{(TC3)}. Secondly, the parameters $\theta$ and $b$ verify 
(\ref{thetacondition1}), 
 (\ref{thetacondition2}) and 
 (\ref{thetacondition3}), which implies that the hypothesis of the Lemma \ref{Gdif} are satisfied, then we have that $\eta\mapsto G(t,\eta)$ is differentiable for any fixed $t\geq0$. 

On the other hand, we can combine Definition \ref{NUEDD}, (\ref{Green}), \eqref{cotaderivada} and the Lemma \ref{estimacionderivaY}, therefore we obtain the following: 
$$
\begin{array}{rcl}
\left \| \Lambda(t,\eta)\right \|&\leq&\displaystyle \int_{0}^{t}Ke^{-\alpha(t-s)+\mu s}L_{f}e^{-\theta s}K_0e^{a(t-s)+\varepsilon t}e^{\frac{L_{f}K_0}{\theta-\varepsilon}}ds\\\\
&&+\displaystyle\int_{t}^{+\infty}Ke^{\alpha(t-s)+\mu s}L_{f}e^{-\theta s}K_0e^{a(s-t)+\varepsilon t}e^{\frac{L_{f}K_0}{\theta-\varepsilon}}ds\\\\
&=& \displaystyle e^{\frac{L_{f}K_0}{\theta-\varepsilon}}KL_{f}K_0e^{(-\alpha+a+\varepsilon)t}\displaystyle\int_{0}^{t}e^{-(\theta+a-\alpha-\mu)s}ds\\\\
&&+e^{\frac{L_{f}K_0}{\theta-\varepsilon}}KL_{f}K_0 e^{(\alpha-a+\varepsilon)t}\displaystyle\int_{t}^{+\infty}e^{-(\alpha-\mu+\theta-a)s}ds,
\end{array}$$
and by considering (\ref{thetacondition1}), we have that:
$$\begin{array}{rcl}
\left \| \Lambda(t,\eta)\right \|&=&\displaystyle \frac{e^{\frac{L_{f}K_0}{\theta-\varepsilon}}KL_{f}K_0}{\theta+a-\alpha-\mu}e^{(-\alpha+a+\varepsilon)t}\displaystyle \left (1-e^{-(\theta+a-\alpha-\mu)t}\right )\\\\
&&+\displaystyle\frac{e^{\frac{L_{f}K_0}{\theta-\varepsilon}}KL_{f}K_0}{\alpha-\mu+\theta-a} e^{(\alpha-a+\varepsilon)t}\displaystyle e^{-(\alpha-\mu+\theta-a)t}\\\\
&\leq&\displaystyle \frac{e^{\frac{L_{f}K_0}{\theta-\varepsilon}}KL_{f}K_0}{\theta+a-\alpha-\mu}e^{(-\alpha+a+\varepsilon)t}+\displaystyle\frac{e^{\frac{L_{f}K_0}{\theta-\varepsilon}}KL_{f}K_0}{\alpha-\mu+\theta-a} \displaystyle e^{-(\theta-(\mu+\varepsilon))t}.
\end{array}$$

Additionally, the conditions (\ref{thetacondition1}), (\ref{thetacondition2}) and (\ref{thetacondition3}) allow us to ensure the following estimate:
\begin{equation*} 
\|\Lambda(t,\eta)\|\leq \displaystyle\frac{1}{c}\left(e^{(-\alpha+a+\varepsilon)t}+e^{-(\theta-(\mu+\varepsilon))t}\right)\leq\displaystyle\frac{1}{c}\left(e^{(-\alpha+a+\varepsilon)t}+1\right),
\end{equation*}
which implies that there exists  $\tilde{t}_{c}$ such that $\left \| \Lambda(t,\eta)\right \|<1$ for any $t\in[0,\tilde{t}_{c})$ and $\eta \in \mathbb{R}^{n}$, therefore $\det \frac{\partial G}{\partial \eta}(t,\eta)\neq 0$ for any $(t,\eta)\in [0,\tilde{t}_{c})\times \mathbb{R}^{n}$.

Finally, by using the continuity of
$\eta \mapsto \partial_{2}f(s,y(s,t,\eta))\frac{\partial y}{\partial \eta}(s,t,\eta)$, combined with the last estimate for any fixed $t\in[0,\tilde{t}_{c})$,
the Dominated convergence theorem implies that $\eta \mapsto \frac{\partial G}{\partial \eta}(t,\eta)$ is continuous in $\mathbb{R}^{n}$.

The result follows straightforwardly as a consequence of a result of Plastock  \cite[Cor.2.1]{Plastock}. 
\end{proof}


\section{Concluding remarks}
\subsection{Context of main results}
A well known approach to address the smoothness of the nonautonomous topological conjugation is the spectral approach,
which emulates the non resonance conditions from the autonomous framework. On the other hand, in this work
we follow an alternative approach which constructs explicitly the topological conjugation via the Green function
and it is important to emphasize that both approaches work with different assumptions regarding the nonlinear perturbation. 

When considering the Green's function approach with nontrivial projectors, namely the expansive/contractive case,
there exists no smoothness results with the exception of the work of N. Jara \cite{Jara}. Our two main results
relax some assumptions of \cite{Jara} and its main features are the following:

The first result of this article, namely Theorem \ref{Teo1}, considers explicitly 
the assumption of nonuniform bounded growth without using the spectrum associated to nonuniform exponential dichotomy. The bounded growth property is essential
to deduce the continuity of the solutions of (\ref{nolin1}) with respect to the initial conditions, as it can be seen in
Lemma \ref{continuidadCIy}. We stress that the previous works on this approach were restricted to the particular case of bounded matrices $A(\cdot)$.

The second main result of this article, namely Theorem \ref{Teo2}, deserves important remarks, which are detailed below:

\medskip

\noindent 1) A careful reading of the proof shows that $\eta \mapsto G(t,\eta)$
verifies \textbf{(TC3)} and $\eta \mapsto \frac{\partial G}{\partial \eta}(t,\eta)$ is continuous in $\mathbb{R}^{n}$ for any fixed $t\in\mathbb{R}_{0}^{+}$. The use of the interval $[0,\tilde{t}_{c})$ is necessary in order to obtain that $\|\Lambda(t,\eta)\|<1$, which is only a sufficient condition ensuring that  $\det \frac{\partial G}{\partial \eta}(t,\eta)\neq 0$.

\medskip

\noindent 2) The conditions (\ref{thetacondition1}), (\ref{thetacondition2}) and (\ref{thetacondition3}) for $\theta$  are consistent with the parameter $b$ and the conditions (\ref{conditions2}) and (\ref{conditions3}), this is mainly due to the fact that parameters $L_f$ and $b$ have a certain degree of freedom respect to the rest of the parameters.  

\medskip

\noindent 3) By considering the condition \eqref{constraints-IH}, if $\varepsilon\geq\mu$, then we have that $-\alpha+a+\varepsilon>0$, which implies that $\tilde{t}_{c}=\frac{\ln(c-1)}{-\alpha+a+\varepsilon}>0$. Otherwise, when $\varepsilon<\mu$, the sign of $-\alpha+a+\varepsilon$ remains undetermined. Notice that if $-\alpha+a+\varepsilon\leq 0$ is satisfied, we will obtain that $\|\Lambda(t,\eta)\|<1$ for any $t\geq 0$ and, consequently, $\tilde{t}_{c}=+\infty$. 

\subsection{Comparison with Jara's result}
Whereas in \cite[p.1761]{Jara} the author imposes classical assumptions on the Green's function, namely conditions
\textbf{c2)} and \textbf{c3)} which are necessary to the explicit construction of the topological equivalence, in our work we obtain explicit conditions ensuring these assumptions in (\ref{Green1}) by considering a Banach space with a
weighted norm interlinking the parameters associated to the dichotomy, the  bounded growth properties as well as the nonlinearities.

In \cite[Cor. 4.8]{Jara}, the author obtains a result formally similar to Theorems \ref{Teo1} and \ref{Teo2}
of this article allowing a deeper comparison:

\begin{enumerate}
\item[i)] With respect to the construction of the topological equivalence, the condition \textbf{(c3)} from \cite{Jara} is verified only for nonlinear perturbations which exponentially decrease towards the origin when $t\to +\infty$ while the left estimation of (\ref{Green1}) is satisfied
for unbounded perturbations verifying \textbf{(P2)} with (\ref{conditions1}).
\item[ii)] With respect to the smoothness of the topological equivalence, Theorem \ref{Teo2} guarantees a $C^1$-diffeomorphism $\eta\mapsto G(t,\eta)$ on an interval $[0,\tilde{t}_{c})$, whereas Corollary 4.8 in \cite{Jara} ensures a $C^1$-diffeomorphism on the entire half-line $\mathbb{R}^{+}_{0}$. 
Often, Theorem~2 can be extended further in time  only for a quite restrictive region in the space of parameters.
\end{enumerate}

\bibliographystyle{abbrv}

\bibliography{samplebib}

\end{document}